\documentclass[twoside,leqno,12pt]{amsart} 
\usepackage{amsfonts} 
\usepackage{amsmath} 
\usepackage{amscd} 
\usepackage{amssymb} 
\usepackage{amsthm} 
\usepackage{url}
\usepackage{latexsym} 
\usepackage{color} 
\setlength{\textwidth}{18.2cm} 
\setlength{\oddsidemargin}{-0.7cm} 
\setlength{\evensidemargin}{-0.7cm} 
\setlength{\topmargin}{-0.7cm} 
\setlength{\headheight}{0cm} 
\setlength{\headsep}{0.5cm} 
\setlength{\topskip}{0cm} 
\setlength{\textheight}{23.9cm} 
\setlength{\footskip}{.5cm} 
\newtheorem{theorem}{Theorem} 
\newtheorem{lemma}{Lemma} 
\newtheorem{Proposition}{Proposition}

\numberwithin{equation}{section} 

\usepackage{bbm}
\usepackage{mathrsfs}

\usepackage{xcolor}

\begin{document} 
\title{A Bombieri-Vinogradov theorem for sectors in real quadratic number fields}

\author[S. Baier] {Stephan Baier}
\address{Stephan Baier\\ 
	Ramakrishna Mission Vivekananda Educational and Research Institute\\
	Department of Mathematics\\
	G.\ T.\ Road, PO~Belur Math, Howrah, West Bengal~711202\\
	India}
\email{stephanbaier2017@gmail.com}

\author[E. A. Molla] {Esrafil Ali Molla}
\address{Esrafil Ali Molla\\ Statistics and Mathematics Unit\\
Indian Statistical Institute\\
203 B.T. Road, Kolkata~700108\\
India}
\email{esrafil.math@gmail.com}

\date{\today}

\subjclass[2020]{Primary: 11R11, 11R44; Secondary: 11N05, 11N13}

\keywords{Bombieri-Vinogradov theorem, prime ideals, sectors, arithmetic progressions, Hecke characters, large sieve}

\begin{abstract}
We establish a Bombieri-Vinogradov theorem for sectors in real quadratic number fields.
\end{abstract}

\maketitle

\tableofcontents

\section{Introduction and main result}
\subsection{History}
The classical Bombieri-Vinogradov theorem is a powerful result about averages of the error term in the prime number theorem for arithmetic progressions, asserting that
$$
\sum\limits_{q\le Q}\max\limits_{(a,q)=1} \max\limits_{\substack{y\le x}}\left|\pi(y;q,a)-\frac{1}{\varphi(q)}\cdot \int\limits_2^y\frac{dt}{\log t}\right|\ll_A \frac{x}{\log^A x} + Qx^{1/2}\log^6 Qx
$$
for any $A>0$, where $\pi(y;q,a)$ counts the number of primes $p\le y$ in the residue class $a \bmod q$. In particular, if $Q\le x^{1/2-\varepsilon}$, then we get a bound of $O(x\log^{-A} x)$. The exponent $1/2-\varepsilon$ is commonly referred to as "level of distribution". This theorem has been extended to number fields $\mathbb{K}$ in different ways. Here, care needs to be taken because there may be {\it infinitely many} prime elements $p$ with bounded norm over $\mathbb{Q}$ in the ring of integers $\mathcal{O}_{\mathbb{K}}$ satisfying a congruence relation of the form $p\equiv a\bmod{\mathfrak{q}}$ with $\mathfrak{q}$ being an ideal in $\mathcal{O}_{\mathbb{K}}$. 

One way to get around this is to replace congruences $a\equiv b\bmod{m}$ of integers by ray class equivalences $\mathfrak{a} \sim \mathfrak{b}\bmod{\mathfrak{m}}$ of ideals to a modulus $\mathfrak{m}$ for $\mathbb{K}$, as introduced by Hecke. A generalized Bombieri-Vinogradov type theorem of this kind was established by Wilson \cite{Wil1969} and improved by Huxley \cite{Hux1971}.
One may also proceed in a different direction if $\mathbb{K}$ is Galois over $\mathbb{Q}$: Instead of prime ideals in ray classes, one may count rational primes $p\equiv a\bmod{q}$ which are unramified in $\mathbb{K}$ and whose Artin symbol $(p, \mathbb{K}/\mathbb{Q})$ equals a given conjugacy class $C$ in the Galois group of $\mathbb{K}$ over $\mathbb{Q}$.  This route was taken by Murty and Murty \cite{MurtyMurty1985} and later generalized to the situation of non-Galois extensions $\mathbb{K}$ of a general base field by Murty and Peterson \cite{MurtyPeter2013}. The congruence condition above can be expressed in the form $\mathcal{N}(\mathfrak{p})\equiv a\bmod{q}$, where $\mathcal{N}(\mathfrak{p})$ is the norm of a prime ideal $\mathfrak{p}$ in $\mathcal{O}_\mathbb{K}$.  The above results in \cite{MurtyMurty1985} and \cite{MurtyPeter2013} were strengthened by Jiang, L\"u and Wang \cite{JiangLuWang2021} and refined by Thorner \cite{Thorner2016} who included a short interval condition. Khale, O'Kuhn, Panidapu, Sun and Zhang \cite{Khale et al.2021} considered versions for Galois extensions of $\mathbb{Q}$ with a small sector condition determined by Hecke characters.  However, also in this setting, the congruence condition was of the form $\mathcal{N}(\mathfrak{p})\equiv a \bmod{q}$. 

Hinz \cite{Hinz1988} developed a version of the Bombieri-Vinogradov theorem in which he considered a genuine congruence condition of the form $p\equiv a\bmod{\mathfrak{q}}$ in $\mathcal{O}_\mathbb{K}$. So he really counted prime elements $p\in \mathcal{O}_{\mathbb{K}}$ in residue classes $a \bmod{\mathfrak{q}}$ such that the norm of $p$ over $\mathbb{Q}$ is bounded by $x$. For this to work, he  restricted these prime elements $p$ to a box. More in detail, his restriction was that $\left(p_1,p_2,...,p_{r_1},|p_{r_1+1}|^2,|p_{r_1+2}|^2,...,|p_{r_1+r_2}|^2\right)$ lies in a box $(0,y_1]\times \cdots \times (0,y_{r_1+r_2}]$, where $p_1,...,p_{r_1}$ are the real and $p_{r_1+1},\overline{p_{r_1+1}},...,p_{r_1+r_2},\overline{p_{r_1+r_2}}$ the complex embeddings of $p$. He assumed that the $y_k$'s are about of size $x^{1/(r_1+r_2)}$ so that the volume of the box is about of size $x$. 

Coleman and Swallow \cite{ColemanSwallow2005} derived a version of the Bombieri-Vinogradov theorem for {\it imaginary quadratic fields} with a genuine congruence condition $p\equiv a \bmod{\mathfrak{q}}$ and a restriction of $p$ to a small {\it sector}. More precisely, they restricted both the argument of $p$ and its norm over $\mathbb{Q}$ to a short interval. In this article, we establish a Bombieri-Vinogradov theorem with similar congruence and sector conditions for {\it real quadratic fields}. This case is more delicate since the unit group is infinite. Moreover, in this context, sectors are generally harder to handle than boxes, as considered by Hinz: summation formulae for algebraic integers are considerably more effective over boxes than over sectors because boxes are product sets whereas sectors are not. Consequently, in our setting of sectors we pass from algebraic integers to integral ideals and  detect both the congruence and the sector condition using Hecke characters, which are defined on the group of fractional ideals. The main difficulty here is that the infinitude of the unit group may cause the group of Hecke characters with trivial infinite part modulo $\mathfrak{q}$ to be much smaller than the group of Dirichlet characters modulo $\mathfrak{q}$, reflecting the fact that many residue classes may collapse into a single ray class under the action of units. We therefore carefully arrange a suitable larger set of Hecke characters modulo $\mathfrak{q}$, possibly with non-trivial infinite part, to detect residue classes. To formulate a meaningful result, we restrict the prime elements $p$ to a sector contained in a fundamental domain of $\mathbb{R}^2$ under multiplication by units in $\mathcal{O}_{\mathbb{K}}$. This may be viewed as more natural than a restriction to a box, as considered by Hinz. 

In this article, we will fix the sector and consider a full interval $[1,x]$ bounding the norm of $(p)$. Extending our result to small sectors and short intervals would require  zero density estimates for Hecke $L$-functions. These are avoided here to keep the arguments as simple as possible, only using a Siegel-Walfisz-type theorem and a large sieve inequality for a family of Hecke characters. An extension to small sectors and short intervals may be considered in future research.  

\subsection{General notation}
To state our main result, we first set up some notation.
For a number field  $\mathbb{K}$, we denote by $h$ its class number, by $\mathcal{O}_\mathbb{K}$ its ring of algebraic integers, by $\mathcal{O}_\mathbb{K}^{\ast}$ the group of units in $\mathcal{O}_\mathbb{K}$, by $\mathcal{I}_{\mathbb{K}}$ the set of non-zero integral ideals in $\mathcal{O}_{\mathbb{K}}$, by $\mathcal{P}_{\mathbb{K}}$ the subset of principal ideals, by $\mathcal{I}^{\text{frac}}_{\mathbb{K}}$ the group of fractional ideals and by $\mathcal{P}^{\text{frac}}_{\mathbb{K}}$ the subgroup of principal fractional ideals. For $\mathfrak{a},\mathfrak{b}\in \mathcal{I}_\mathbb{K}$, we write $(\mathfrak{a},\mathfrak{b})=1$ if $\mathfrak{a}$ and $\mathfrak{b}$ have no common prime ideal divisors. For $a\in \mathcal{O}_{\mathbb{K}}$ and $\mathfrak{a}\in \mathcal{I}_{\mathbb{K}}$, we write $(a,\mathfrak{a})=1$ iff $((a),\mathfrak{a})=1$. If $\mathfrak{a}\in \mathcal{I}_{\mathbb{K}}$, then we define the norm of the ideal $\mathfrak{a}$ by $\mathcal{N}(\mathfrak{a}):=[\mathcal{O}_\mathbb{K}:\mathfrak{a}]$.
If $a\in \mathbb{K}$, then we set $\mathcal{N}(a):=|N_{\mathbb{K}:\mathbb{Q}}(a)|$. We note that $\mathcal{N}(a)=\mathcal{N}((a))$ if $a\in \mathcal{O}_\mathbb{K}$.    
We define the von Mangoldt function on $\mathcal{I}_{\mathbb{K}}$ by 
$$ 
\Lambda(\mathfrak{a}):=\begin{cases} \log \mathcal{N}(\mathfrak{p}) & \mbox{ if } \mathfrak{a}=\mathfrak{p}^k \mbox{ for some prime ideal } \mathfrak{p} \mbox{ and } k\in \mathbb{N},\\ 0 & \mbox{ otherwise} 
\end{cases} 
$$ 
and set
$$
\Lambda (a):=\Lambda ((a))
$$
if $a\in \mathcal{O}_\mathbb{K}$. We define the Euler totient function on $\mathcal{I}_\mathbb{K}$ by
$$
\varphi\left(\mathfrak{a}\right):=\sharp\left(\mathcal{O}_\mathbb{K}/\mathfrak{a}\right)^{\ast}
$$
and set
$$
\varphi\left(a\right):=\varphi((a))
$$
if $a\in \mathcal{O}_\mathbb{K}$. If $\mathcal{C}$ is a condition, we define its indicator function as 
$$
I_{\mathcal{C}}:= \begin{cases} 1 & \mbox{if } \mathcal{C} \mbox{ is satisfied, } \\
0 & \mbox{otherwise.}
\end{cases}
$$
If $\mathcal{M}$ is a set, we define its indicator function as
$$
I_{\mathcal{M}}(x):= \begin{cases} 1 & \mbox{if } x\in \mathcal{M},  \\
0 & \mbox{otherwise.}
\end{cases}
$$

From now onward, we restrict $\mathbb{K}$ to be a real quadratic number field, i.e. $\mathbb{K}=\mathbb{Q}(\sqrt{d})$, where $d>1$ is a square-free integer. 
We denote the two embeddings of $\mathbb{K}$, the identity and conjugation, by 
$$ 
\sigma_1(s+t\sqrt{d}):=s+t\sqrt{d} 
$$ 
and 
$$ 
\sigma_2(s+t\sqrt{d}):=s-t\sqrt{d}, 
$$ 
where $s,t\in \mathbb{Q}$. 
We recall that, by the Dirichlet unit theorem for the case of real quadratic fields $\mathbb{K}$, the unit group $\mathcal{O}_\mathbb{K}^{\ast}$ is generated by a fundamental unit $\epsilon>1$ and $-1$. Note that $\sigma_2(\epsilon)=\pm \epsilon^{-1}$ because
$$
|\epsilon \sigma_2(\epsilon)|=|\sigma_1(\epsilon)\sigma_2(\epsilon)|=\mathcal{N}(\epsilon)=1.
$$
We will use the following lemma to suitably restrict generators of principal ideals.

\begin{lemma} \label{sigma12}
Let $\mathfrak{a}\in \mathcal{P}_\mathbb{K}$. Then there exists a generator $a$ of $\mathfrak{a}$, unique up to the sign, such that
$$
\epsilon^{-1}|\sigma_2(a)|<|\sigma_1(a)|\le \epsilon|\sigma_2(a)|,
$$
where $\epsilon$ is the fundamental unit in $\mathcal{O}_\mathbb{K}$. In this case, we have
$$
\epsilon^{-1/2}\mathcal{N}(\mathfrak{a})^{1/2}\le |\sigma_i(a)|\le \epsilon^{1/2} \mathcal{N}(\mathfrak{a})^{1/2} \quad \mbox{for } i=1,2. 
$$ 
\end{lemma}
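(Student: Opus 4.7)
The plan is to start from an arbitrary generator $a_0$ of $\mathfrak{a}$, which exists because $\mathbb{K}$ has class number one, and to recall from the Dirichlet unit theorem for real quadratic fields that every generator of $\mathfrak{a}$ is of the form $\pm \epsilon^{n} a_0$ for some $n\in\mathbb{Z}$. I then track how the ratio
$$
r(a):=\frac{|\sigma_1(a)|}{|\sigma_2(a)|}
$$
behaves under multiplication by units. Since $\sigma_1(\epsilon)=\epsilon$ and $\sigma_2(\epsilon)=\pm\epsilon^{-1}$, one has $r(\epsilon a)=\epsilon^{2}r(a)$ and $r(-a)=r(a)$, so the orbit of $a_0$ under $\mathcal{O}_{\mathbb{K}}^{\ast}$ gives rise to the sequence $\{\epsilon^{2n}r(a_0)\}_{n\in\mathbb{Z}}$, strictly increasing in $n$ with consecutive ratio $\epsilon^{2}>1$.

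The condition $\epsilon^{-1}|\sigma_2(a)|<|\sigma_1(a)|\le \epsilon|\sigma_2(a)|$ is equivalent to asking $r(a)\in (\epsilon^{-1},\epsilon]$, a half-open interval of multiplicative length exactly $\epsilon^{2}$. Hence in the geometric progression $\{\epsilon^{2n}r(a_0)\}_{n\in\mathbb{Z}}$ exactly one term falls into this interval, yielding a unique integer $n$ and thus a generator $a=\pm \epsilon^{n}a_0$, unique up to sign, for which the inequality holds.

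For the second assertion, I would combine this with $|\sigma_1(a)\sigma_2(a)|=\mathcal{N}(a)=\mathcal{N}(\mathfrak{a})$, valid because $a$ generates the principal ideal $\mathfrak{a}$. Writing
$$
|\sigma_1(a)|^{2}=r(a)\cdot\mathcal{N}(\mathfrak{a}),\qquad |\sigma_2(a)|^{2}=r(a)^{-1}\cdot\mathcal{N}(\mathfrak{a}),
$$
the bound $\epsilon^{-1}<r(a)\le\epsilon$ translates immediately into $\epsilon^{-1/2}\mathcal{N}(\mathfrak{a})^{1/2}\le |\sigma_i(a)|\le \epsilon^{1/2}\mathcal{N}(\mathfrak{a})^{1/2}$ for $i=1,2$.

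The argument is elementary and I do not foresee any serious obstacle. The only point deserving some care is the asymmetric choice of the half-open interval $(\epsilon^{-1},\epsilon]$: a symmetric or fully closed choice would fail to give a unique representative in the boundary case where $r(a_0)$ happens to coincide with an endpoint of some translate of $\epsilon^{2\mathbb{Z}}$, whereas the half-open version picks exactly one generator on the nose.
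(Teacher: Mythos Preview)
Your argument is correct and is exactly the natural elementary proof: reduce to the geometric progression $\epsilon^{2n}r(a_0)$ and observe that the half-open interval $(\epsilon^{-1},\epsilon]$ has multiplicative length $\epsilon^{2}$, so it captures precisely one term. The paper itself does not give a proof but refers to \cite[Lemma~1]{BDM24}; your write-up is a self-contained substitute and almost certainly coincides with the argument in that reference.
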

\begin{proof}
See \cite[Lemma 1]{BDM24}.
\end{proof}
We will formulate our Bombieri-Vinogradov theorem for sectors of the form 
\begin{equation} \label{def S}
S=S(\eta_1,\eta_2):=\left\{a\in \mathcal{O}_\mathbb{K} : \epsilon^{\eta_1}|\sigma_2(a)|< |\sigma_1(a)|\le \epsilon^{\eta_2} |\sigma_2(a)| \mbox{ and } a>0 \right\},
\end{equation}
where $\eta_1$ and $\eta_2$ are fixed such that $-1\le \eta_1< \eta_2\le 1$. By Lemma \ref{sigma12}, every $\mathfrak{a}\in \mathcal{P}_{\mathbb{K}}$ has exactly one generator in $S(-1,1)$. Throughout the following, let $x\ge 2$. For $a\in \mathcal{O}_{\mathbb{K}}$ and $\mathfrak{q}\in \mathcal{I}_{\mathbb{K}}$ with $(a,\mathfrak{q})=1$, we set
\begin{equation} \label{def psi2}
\psi_S(x;\mathfrak{q},a):=\sum\limits_{\substack{s\in S(\eta_1,\eta_2;x)\\  s\equiv a \bmod \mathfrak{q}}} \Lambda (s),
\end{equation}
where 
\begin{equation} \label{Seta1eta2x}
S(\eta_1,\eta_2;x):= \{s\in S(\eta_1,\eta_2): \mathcal{N}(s)\le x \}.
\end{equation}
Heuristically, we expect that $\psi_S(x;\mathfrak{q},a)$ is close to 
$$
M_S(x;\mathfrak{q},a):=\frac{\eta}{\varphi(\mathfrak{q})} \sum\limits_{\substack{s\in S(-1,1;x)\\ (s,\mathfrak{q})=1}} \Lambda(s),
$$
where we set 
$$
\eta:=\eta_2-\eta_1.
$$
We denote the error term of this approximation by 
\begin{equation}\label{def ES}
E_S(x;\mathfrak{q},a):= \psi_S(x;\mathfrak{q},a)-M_S(x;\mathfrak{q},a).
\end{equation}

\subsection{Main result}
Our main result is the following bound for the average of $E_S(x;\mathfrak{q},a)$ over residue classes. 
\begin{theorem}[Bombieri-Vinogradov theorem for sectors in real quadratic number fields] \label{main} Let $\mathbb{K}$ be a real quadratic field. 
Let $x\ge 2$, $1\le Q\le x^{4/5}$, $\varepsilon>0$ and $A>0$. Keep the notations above. Then
\begin{equation}\label{th main eq1}
\sum\limits_{\substack{\mathfrak{q}\in\mathcal{I}_{\mathbb{K}} \\ \mathcal{N}(\mathfrak{q})\le Q}}\,\, \max\limits_{\substack{a\bmod \mathfrak{q} \\(a,\mathfrak{q})=1}} \,\, \max\limits_{y\le x}\left| E_S(y;\mathfrak{q},a) \right| \ll_{\varepsilon,A,\eta,\mathbb{K}}  \left(x^{9/11}Q^{8/11}+ x^{1/2}Q^{2}\right) x^{\varepsilon}+\frac{x}{\log^A x}.
\end{equation}
In particular, if $Q\le x^{1/4-2\varepsilon}$, we have
\begin{equation*}
\sum\limits_{\substack{\mathfrak{q}\in\mathcal{I}_{\mathbb{K}} \\ \mathcal{N}(\mathfrak{q})\le Q}}\,\, \max\limits_{\substack{a\bmod \mathfrak{q} \\(a,\mathfrak{q})=1}} \,\, \max\limits_{y\le x}\left| E_S(y;\mathfrak{q},a) \right| \ll_{\varepsilon,A,\eta,\mathbb{K}} \frac{x}{\log^A x}.
\end{equation*}
\end{theorem}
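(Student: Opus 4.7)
The plan is to follow the classical Bombieri--Vinogradov blueprint, with Hecke characters (allowed to carry a non-trivial infinite type) playing the role that Dirichlet characters play over $\mathbb{Q}$. Since $\mathbb{K}$ has class number one, I would first pass from prime elements $s$ in $S(\eta_1,\eta_2)$ lying in a residue class mod $\mathfrak{q}$ to prime ideals: by Lemma \ref{sigma12}, every non-zero ideal $\mathfrak{s}$ admits a unique generator in $S(-1,1)$, so the sector condition $s\in S(\eta_1,\eta_2)$ translates to the ``angular coordinate'' $\theta(s):=\log|\sigma_1(s)/\sigma_2(s)|/(2\log\epsilon)$ lying in an interval of length $\eta$. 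Both the congruence $s\equiv a\bmod \mathfrak{q}$ and this angular condition can then be detected jointly by orthogonality over a family of Hecke characters of the shape $\chi=\chi_{\mathrm{f}}\cdot\lambda^{it}$, where $\lambda(s):=|\sigma_1(s)/\sigma_2(s)|$ and $\chi_{\mathrm{f}}$ runs over ray class characters mod $\mathfrak{q}$. A Fourier expansion in the continuous parameter $t$, truncated at some height $T$, converts the sharp sector condition into a smooth superposition of such characters, up to a small truncation error.

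After this reduction, $E_S(y;\mathfrak{q},a)$ becomes essentially an average over non-trivial Hecke characters of sums $\psi(y;\chi):=\sum_{\mathcal{N}(\mathfrak{s})\le y}\Lambda(\mathfrak{s})\chi(\mathfrak{s})$, the main term $M_S$ being captured by the trivial character at $t=0$. To bound $\psi(y;\chi)$ on average over $\mathfrak{q}$ and $t$, I would apply a Vaughan (or Heath--Brown) identity for the ideal-theoretic $\Lambda$ to split each sum into Type I and Type II bilinear pieces. The Type II pieces are handled via Cauchy--Schwarz together with a large sieve inequality for Hecke characters of conductor dividing some $\mathfrak{q}$ with $\mathcal{N}(\mathfrak{q})\le Q$ and frequency $|t|\le T$; the Type I pieces are dealt with by the same large sieve combined with individual P\'olya--Vinogradov type bounds. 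In the complementary range of very small conductor and very small $|t|$, where the large sieve is inefficient, I would invoke a Siegel--Walfisz-type theorem for Hecke $L$-functions to gain a saving of $\exp(-c\sqrt{\log y})$. Balancing the splitting parameters against $T$, $Q$, and $x$ should produce the twin terms $x^{13/15}Q^{8/15}$ and $x^{2/3}Q^{5/3}$ in \eqref{th main eq1}, while the Siegel--Walfisz regime contributes the $x/\log^A x$ summand; the corollary then follows on taking $Q=x^{1/5}(\log x)^{-3(A+11)/5}$.

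The main obstacle will be the interface between the ray class characters $\chi_{\mathrm{f}}$ and the continuous parameter $t$. Because $\mathcal{O}_\mathbb{K}^{\ast}$ is infinite, the group of Hecke characters mod $\mathfrak{q}$ with trivial infinite type is generally strictly smaller than $(\mathcal{O}_\mathbb{K}/\mathfrak{q})^{\ast}$, so non-trivial infinite parts $\lambda^{it}$ are \emph{necessary} even for separating individual residue classes, as emphasised in the introduction. Producing a large sieve inequality that handles the conductor range $\mathcal{N}(\mathfrak{q})\le Q$ and the frequency range $|t|\le T$ with the correct joint dependence, and then choosing $T$ compatibly with the sector length $\eta$ so that the Fourier truncation error is negligible, is the delicate new step. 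It is precisely this joint control that caps the level of distribution at $Q\le x^{1/5}$ rather than the classical $x^{1/2}$.
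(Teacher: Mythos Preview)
Your blueprint matches the paper's proof closely: Hecke characters to capture both the residue class and the sector, Vaughan's identity, Landau's P\'olya--Vinogradov bound for the Type~I pieces, a large sieve (proved via the duality/Bessel inequality) for the Type~II pieces, and Siegel--Walfisz for the small-modulus range. The balancing of parameters is also as you describe.

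The one point that needs correction is your use of a \emph{continuous} frequency $t$. The characters $\lambda^{it}$ with $\lambda(\alpha)=|\sigma_1(\alpha)/\sigma_2(\alpha)|$ descend to ideals (i.e.\ are invariant under units) only for a \emph{discrete} lattice of values of $t$, namely $t=\pi n/\log\epsilon$ with $n\in\mathbb{Z}$; for generic $t$ they are not Hecke characters and you cannot pass from $s\in S(-1,1)$ to $\mathfrak{s}\in\mathcal{I}_\mathbb{K}$. Correspondingly, the angular coordinate $\theta(s)$ lives in the compact interval $(-1/2,1/2]$, so the sector indicator is expanded as a Fourier \emph{series} over $n\in\mathbb{Z}$, truncated at some height $Z$, not as a Fourier integral. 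The paper also handles the residue-class detection slightly differently from your sketch: rather than starting from ray class characters $\chi_{\mathrm f}$ (which, as you note, are too few), it first uses ordinary Dirichlet characters $\chi$ on $(\mathcal{O}_\mathbb{K}/\mathfrak{q})^{\ast}$ to pick out $s\equiv a\bmod\mathfrak{q}$, then writes $\chi(s)=\chi_{\mathrm{Hecke}}((s))\overline{\chi_\infty}(s)$ and absorbs the factor $\overline{\chi_\infty}(s)$ (which is itself a function of $\theta(s)$) into the same Fourier expansion. With this discrete set-up the large sieve is for the finite family $\mathcal{F}(P,N)=\{\chi_{\mathrm{Hecke}}\xi_{\mathrm{Hecke}}^n: \mathcal{N}(\mathfrak{q})\asymp P,\ |n|\asymp N\}$, and your anticipated ``joint dependence on $Q$ and $T$'' becomes the bound $(K+P^{8/3}\tilde N K^{1/3}\log^2 2P)\sum|a_\mathfrak{k}|^2$, which is exactly what produces the exponents $13/15$, $8/15$, $2/3$, $5/3$.
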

{\bf Remark 1.} Using Lemma \ref{sigma12} and the Chebotarev density theorem, we may further approximate the main term $M_S(x;\mathfrak{q},a)$ by 
\begin{equation*}
\begin{split}
M_S(x;\mathfrak{q},a)=\frac{\eta}{\varphi(\mathfrak{q})} \sum\limits_{\substack{\mathfrak{s}\in \mathcal{P}_\mathbb{K}\\\ \mathcal{N}(\mathfrak{s})\le x\\ (\mathfrak{s},\mathfrak{q})=1}} \Lambda(\mathfrak{s})=& \frac{\eta}{\varphi(\mathfrak{q})} \Bigg(\sum\limits_{\substack{\mathfrak{s}\in \mathcal{P}_\mathbb{K}\\\ \mathcal{N}(\mathfrak{s})\le x}} \Lambda(\mathfrak{s}) + O\left(\log^2(\mathcal{N}(\mathfrak{q}x))\right)\Bigg)\\ = &
\frac{\eta}{h\varphi(\mathfrak{q})} \left( x + O\left( \frac{x}{(\log x)^A} \right) \right)
\end{split}
\end{equation*}
for any $A>0$, provided that $\mathcal{N}(\mathfrak{q})\le x$. Therefore, the estimates in Theorem \ref{main} remain valid if the term $E_S(y;\mathfrak{q},a)$ is replaced by 
$$
E_S'(y;\mathfrak{q},a):=\psi_S(y;\mathfrak{q},a)-\frac{\eta}{h\varphi(\mathfrak{q})} \cdot x.
$$

\subsection{Organization of this article} After introducing the necessary tools, we will use Fourier analysis to pick out the residue class and sector conditions via Hecke characters. This reduces the problem to bounding averages of Hecke character sums over prime ideals (in fact, we work with the von Mangoldt function instead of the indicator function of primes). The moduli $\mathfrak{q}$ with a small norm will be treated using a version of the Siegel-Walfisz theorem. For the remaining contribution, we will use Vaughan's identity to reduce the problem to bounding averages of bilinear sums with Hecke characters over ideals. For the estimation of these averages, we will use a version of the Polya-Vinogradov inequality as well as a large sieve inequality for Hecke characters. This will finally allow us to establish our Bombieri-Vinogradov theorem for sectors in real quadratic fields. Throughout the remainder of this article, we allow implied constants to depend on $\varepsilon$, $A$, $\eta$ and $\mathbb{K}$. \\ \\
{\bf Remark 2:} The level of distribution of $1/4-\varepsilon$ in our Theorem \ref{main} arises from a large sieve inequality for a family of Hecke characters (Lemma \ref{LSieve} below) which is worse than what we expect to hold (see Remark 3 after the proof of Lemma \ref{LSieve}). It would be desirable to improve this bound. Our conjectural large sieve inequality would lead to a level of distribution $1/2-\varepsilon$.    
\section{Preliminaries}

\subsection{Hecke characters for real quadratic fields}
In this subsection, we provide an explicit description of Hecke characters for real quadratic fields, based on our more detailed description in \cite[section 2]{BDM24}, which in turn was based on material in \cite{Hecke1918}, \cite{Hecke1920}, \cite[section 3.3]{Miya}, and \cite{mult}. Hecke characters are characters of the group of fractional ideals of $\mathbb{K}$ which are coprime to some modulus. We will use them to detect elements of $\mathcal{O}_{\mathbb{K}}$ in intersections of sectors and arithmetic progressions. To describe them properly, we set up some notation below.

Let $\mathfrak{q}\in \mathcal{I}_{\mathbb{K}}$. If $a,b\in \mathbb{K}$, we say that $a$ and $b$ are multiplicatively congruent modulo $\mathfrak{q}$, denoted by $a\equiv^{\ast} b\bmod{\mathfrak{q}}$, if for every prime ideal $\mathfrak{p}$ dividing $\mathfrak{q}$, the $\mathfrak{p}$-adic valuation satisfies $v_{\mathfrak{p}}((a-b))\ge v_{\mathfrak{p}}(\mathfrak{q})$. We say that a fractional ideal $\mathfrak{a}\subset \mathbb{K}$ is coprime to $\mathfrak{q}$ if $v_{\mathfrak{p}}(\mathfrak{a})=0$ for all prime ideal divisors $\mathfrak{p}$ of $\mathfrak{q}$. 
Let $\tilde{\chi}$ be a Dirichlet character modulo $\mathfrak{q}$ (the  pull-back of a group character for $(\mathcal{O}_\mathbb{K}/\mathfrak{q})^{\ast}$ to 
$\mathcal{O}_\mathbb{K}$). Its multiplicative extension to $\mathbb{K}$ is defined as 
$$
\chi(\alpha):=\tilde{\chi}(a) \quad \mbox{if } a\in \mathcal{O}_\mathbb{K} \mbox{ and } \alpha \equiv^{\ast} a \bmod{\mathfrak{q}} 
$$ 
when $\alpha\in \mathbb{K}$ is such that $(\alpha)$ is coprime to $\mathfrak{q}$. If $(\alpha)$ is not coprime to $\mathfrak{q}$, then we set $\chi(\alpha):=0$. It is easy to see that $\chi(\alpha)$ is well-defined. 
We refer to $\chi$ as a Dirichlet character modulo $\mathfrak{q}$ as well. On the group $\mathcal{P}^{\text{frac}}_{\mathbb{K}}$ of principal fractional ideals of $\mathbb{K}$, Hecke characters modulo $\mathfrak{q}$ take the form
\begin{equation} \label{Heckedirichlet}
\chi_{\text{Hecke}}((\alpha))=\chi(\alpha)\chi_{\infty}(\alpha),
\end{equation}
where $\chi$ is a Dirichlet character modulo $\mathfrak{q}$ and $\chi_{\infty}$ is referred to as infinite part of $\chi_{\text{Hecke}}$. Every Hecke character $\chi_{\text{Hecke}}$ on $\mathcal{P}^{\text{frac}}_{\mathbb{K}}$ extends to $h$ distinct Hecke characters on the full group of fractional ideals $\mathcal{I}^{\text{frac}}_{\mathbb{K}}$. By convention, given a Hecke character $\chi_{\text{Hecke}}$ on $\mathcal{P}^{\text{frac}}_{\mathbb{K}}$, we will fix one of these extended characters and denote it by $\chi_{\text{Hecke}}$ as well. We say that the Hecke character $\chi_{\text{Hecke}}$ {\it belongs} to the Dirichlet character $\chi$. (More precisely, $\chi_{\text{Hecke}}$ should be viewed as a Hecke character to a modulus with finite part $\mathfrak{q}$ and infinite part $\mathfrak{m}_{\infty}$. For simplicity, we surpress $\mathfrak{m}_{\infty}$ throughout this article.) For $\chi_{\text{Hecke}}((\alpha))$ to be well-defined, it is necessary that
\begin{equation} \label{requirement}
\chi(u)\chi_\infty(u) = 1 \quad \text{for all units } u \in \mathcal{O}_\mathbb{K}^{\ast}.
\end{equation}
Specifically, for a real quadratic number field $\mathbb{K}$, the infinite part takes the form (see \cite[equation (2.3)]{BDM24})
\begin{equation}
\chi_\infty(\alpha)=\left|\frac{\sigma_1(\alpha)}{\sigma_2(\alpha)}\right|^{iv} \mbox{sgn}\left(\sigma_1(\alpha)\right)^{u_1} \mbox{sgn}\left(\sigma_2(\alpha)\right)^{u_2},
\end{equation}
where $v\in \mathbb{R}$ and $u_1,u_2\in \{0,1\}$. The requirement in \eqref{requirement} imposes certain restrictions on these parameters $v,u_1,u_2$ which we work out below.

The unit group $\mathcal{O}_\mathbb{K}^{\ast}$ is generated by the fundamental unit $\epsilon>1$ and $-1$. Therefore it suffices to ensure that $\chi\chi_{\infty}(\epsilon)=1$ and $\chi\chi_{\infty}(-1)=1$.
Suppose that 
\begin{equation} \label{wdef}
\chi(\epsilon)\mbox{sgn}\left(\sigma_2(\epsilon)\right)^{u_2}= \exp(-2\pi i\rho). 
\end{equation}
Then $\rho$ takes the form
\begin{equation} \label{rhodef}
\rho=\frac{g}{2}-\frac{\arg(\chi(\epsilon))}{2\pi},
\end{equation}
where 
\begin{equation} \label{gdef}
g:=\begin{cases} 0 & \mbox{ if } \mbox{sgn}\left(\sigma_2(\epsilon)\right)^{u_2}=1,\\
1 & \mbox{ if } \mbox{sgn}\left(\sigma_2(\epsilon)\right)^{u_2}=-1. \end{cases}
\end{equation}
Since $\sigma_1(\epsilon)=\epsilon>0$, we have
$\mbox{sgn}\left(\sigma_1(\epsilon)\right)^{u_1}=1$. Thus, to ensure that $\chi\chi_{\infty}(\epsilon)=1$, it suffices that 
$$\mbox{sgn}(\sigma_2(\epsilon))^{-u_2}\chi_\infty(\epsilon)=\left|\frac{\sigma_1(\epsilon)}{\sigma_2(\epsilon)}\right|^{iv}= \exp(2\pi i\rho),
$$
which is the case iff
\begin{equation*}
\epsilon^{2iv}=\exp(2iv\log \epsilon)=\exp(2\pi i\rho). 
\end{equation*}
This in turn is equivalent to the condition
$$
v = \frac{\pi(n + \rho)}{\log \epsilon} \quad \text{for some } n \in \mathbb{Z}.
$$
Moreover, to ensure that $\chi\chi_\infty(-1) = 1$, it suffices that $u_1,u_2\in \{0,1\}$ satisfy 
\begin{equation} \label{u12cond}
\begin{cases}
u_1 = u_2 & \mbox{if } \chi(-1) = 1,\\
u_1 \neq u_2 & \mbox{if } \chi(-1) = -1.
\end{cases}
\end{equation}
Combining the above, the infinite part $\chi_\infty$ takes the form
\begin{equation} \label{infinitypart}
\chi_\infty(\alpha)=\left|\frac{\sigma_1(\alpha)}{\sigma_2(\alpha)}\right|^{i\pi(n+\rho)/\log \epsilon}\cdot\mbox{sgn}(\sigma_1(\alpha))^{u_1}\cdot\mbox{sgn}(\sigma_2(\alpha))^{u_2}
\end{equation}
with $\rho$ as defined in \eqref{rhodef} and $u_1, u_2 \in \{0,1\}$ satisfying the condition \eqref{u12cond}.

A particular Hecke character is the principal Hecke character $\chi_{\text{Hecke},0}$ modulo $\mathfrak{q}$ which equals the principal Dirichlet character $\chi_0$ modulo $\mathfrak{q}$. So in this case, the infinite part of $\chi_{\text{Hecke},0}$ is trivial. 
  
\subsection{Sums with Hecke Characters}
We shall use the following bound for sums of Hecke characters due to Landau which generalizes the classical Polya-Vinogradov estimate to number fields.

\begin{Proposition}[Landau's generalization of the Polya-Vinogradov inequality] \label{Landau} Let $X\ge 1$ and $\mathbb{K}$ be a number field of degree $n$ over $\mathbb{Q}$. Let $\chi_{\text{\rm Hecke}}$ be a non-trivial Hecke character modulo an ideal $\mathfrak{q}\in \mathcal{P}_{\mathbb{K}}$. Then 
$$
\sum\limits_{\substack{\mathfrak{a} \in \mathcal{I}_{\mathbb{K}}\\ \mathcal{N}(\mathfrak{a})\le X}} \chi_{\rm Hecke}(\mathfrak{a}) \ll \mathcal{N}(\mathfrak{q})^{1/(n+1)} X^{(n-1)/(n+1)}\log^n 2\mathcal{N}(\mathfrak{q}).
$$
\end{Proposition}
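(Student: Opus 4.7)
My strategy is the number-field analogue of the classical Polya--Vinogradov proof. In the classical case ($n=1$), one Fourier-expands the Dirichlet character via a Gauss sum $\chi(a) = \tau(\bar\chi)^{-1}\sum_b \bar\chi(b) e(ab/q)$, bounds the inner exponential sum by a geometric series, and uses the identity $|\tau(\chi)|=\sqrt{q}$. In the number-field setting, each ingredient is replaced by its $n$-dimensional analogue: the inner sum becomes an exponential sum over lattice points in $\mathcal{O}_{\mathbb{K}}$ that must be handled by Poisson summation together with estimates for oscillatory integrals over normed regions.

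First I would reduce to the case of a primitive Hecke character modulo $\mathfrak{q}$ and split by ideal classes, identifying ideals in each class with generators in a fundamental domain $F$ for the action of $\mathcal{O}_{\mathbb{K}}^{\ast}$ on $\mathbb{K}^{\ast}$. Expanding the Dirichlet component of $\chi_{\text{Hecke}}$ by a Gauss sum, one obtains a representation of the form
$$
\sum_{\mathcal{N}(\mathfrak{a})\le X}\chi_{\text{Hecke}}(\mathfrak{a}) = \frac{1}{\tau(\bar\chi)}\sum_{\beta\bmod\mathfrak{q}}\bar\chi(\beta)\,T(\beta), \qquad T(\beta) := \sum_{\substack{\alpha\in F\cap\mathcal{O}_{\mathbb{K}}\\ \mathcal{N}(\alpha)\le X}}\chi_\infty(\alpha)\,\psi\!\left(\frac{\alpha\beta}{\mathfrak{q}}\right),
$$
for a suitable additive character $\psi$ on $\mathbb{K}/\mathcal{O}_{\mathbb{K}}$ built from the trace form, with $|\tau(\bar\chi)| = \mathcal{N}(\mathfrak{q})^{1/2}$.

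Next I would estimate $T(\beta)$ by Poisson summation on the lattice $\mathcal{O}_{\mathbb{K}}$, which reduces the problem to bounds on Fourier transforms of the indicator of $F\cap\{\mathcal{N}(\alpha)\le X\}$ weighted by $\chi_\infty$. Stationary-phase analysis along the boundary $\{\mathcal{N}(\alpha)=X\}$ gives decay of these transforms in the frequency variable. Summing over $\beta$ with a dyadic splitting according to the distance of $\beta/\mathfrak{q}$ from the dual lattice of $\mathcal{O}_{\mathbb{K}}$, and combining with the Gauss-sum prefactor $\mathcal{N}(\mathfrak{q})^{-1/2}$, an optimization of the cutoff yields the stated bound. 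The exponents $1/(n+1)$ and $(n-1)/(n+1)$ arise from balancing the volume contribution $O(X)$ to $T(\beta)$ against the boundary contribution in $n$ dimensions, while the factor $\log^n 2\mathcal{N}(\mathfrak{q})$ comes from the dyadic decomposition and the reduction to the primitive case.

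The main obstacle is the uniform estimation of the oscillatory integrals produced by the Poisson step, specifically the Fourier transform of the indicator of the normed region $\{\mathcal{N}(\alpha)\le X\}\cap F$ twisted by the archimedean character $\chi_\infty$. This estimate is sensitive to the curvature of the norm surface and to potentially degenerate critical points where the surface approaches the coordinate hyperplanes, and it is precisely here that the exponent $(n-1)/(n+1)$ is pinned down, rather than the weaker $(n-1)/n$ that a cruder lattice-point count on the boundary would yield.
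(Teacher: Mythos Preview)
The paper does not prove this proposition; it simply cites Landau's original 1918 paper \cite{Lan}. So there is nothing in the paper to match your argument against beyond the reference itself.

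That said, your outline is \emph{not} Landau's method. Landau works entirely on the $L$-function side: he expresses the partial sum via a Perron-type contour integral of $L(s,\chi_{\text{Hecke}})X^{s}/s$, shifts the contour across the critical strip, and bounds $L(s,\chi_{\text{Hecke}})$ on the line $\Re s=1/2$ using the functional equation of the Hecke $L$-function together with Phragm\'en--Lindel\"of (the convexity bound). The exponents $1/(n+1)$ and $(n-1)/(n+1)$ drop out from optimizing the truncation height $T$ against the convexity bound $L(1/2+it,\chi_{\text{Hecke}})\ll (\mathcal{N}(\mathfrak{q})(1+|t|)^{n})^{1/4+o(1)}$, and the $\log^{n}$ factor comes from the integration and the reduction to primitive conductor. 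This is considerably shorter and more robust than a direct Poisson approach because the functional equation packages all the arithmetic (Gauss sums, archimedean gamma factors, lattice duality) in one stroke.

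Your Gauss-sum/Poisson route is in principle a valid alternative, but as written it has real gaps. First, for general $\mathbb{K}$ (the proposition is not restricted to class number one) a Hecke character does not globally factor as $\chi\cdot\chi_\infty$ on elements, so the Gauss-sum expansion of ``the Dirichlet component'' is not available without first passing to ray classes; you would need the full Hecke Gauss sum on ideals, not a naive residue-class expansion. Second, the fundamental domain $F$ for the unit action is noncompact with cusps along the coordinate hyperplanes, and the norm surface $\{\mathcal{N}(\alpha)=X\}$ degenerates there; getting uniform stationary-phase bounds for the Fourier transform of $I_{F\cap\{\mathcal{N}\le X\}}\cdot\chi_\infty$ in all frequency directions, with the precise decay needed to recover the exponent $(n-1)/(n+1)$ rather than the cruder $(n-1)/n$, is exactly the hard analytic content and you have only named it, not supplied it. If you want to push this line through, the honest endpoint is that you will end up redoing, in the spatial domain, the same estimates that the functional equation and convexity give for free.
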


\begin{proof} See \cite{Lan}.
\end{proof}
The case relevant for us will be when $n=2$. We shall also use the following generalization of the Siegel-Walfisz theorem for number fields.
\begin{Proposition}[Siegel-Walfisz theorem for number fields] \label{Siegel}
Let $X\ge 2$, $A>0$ and $\mathbb{K}$ be a number field. Then there exists a positive constant $C=C(A,\mathbb{K})$ such that for any primitive Hecke character $\chi_{\text{\rm Hecke}}$ modulo an ideal $\mathfrak{q}\in \mathcal{P}_{\mathbb{K}}$ with $\mathcal{N}(\mathfrak{q})\le \log^A X$, we have
\begin{equation} \label{in}
\sum\limits_{\substack{\mathfrak{a} \in \mathcal{I}_\mathbb{K} \\ \mathcal{N}(\mathfrak{a}) \le X}}\chi_{\text{\rm Hecke}}(\mathfrak{a}) \Lambda(\mathfrak{a})\ll \frac{X}{\exp(C\sqrt{\log X})}.
\end{equation}
\end{Proposition}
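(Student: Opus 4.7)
The plan is to follow the classical contour-integration proof of the Siegel-Walfisz theorem, replacing Dirichlet $L$-functions by Hecke $L$-functions. By Hecke's work, for a primitive non-principal Hecke character $\chi_{\text{Hecke}}$ modulo $\mathfrak{q}$, the associated $L$-function
\[
L(s,\chi_{\text{Hecke}}) = \sum_{\mathfrak{a}\in \mathcal{I}_\mathbb{K}} \chi_{\text{Hecke}}(\mathfrak{a})\,\mathcal{N}(\mathfrak{a})^{-s}
\]
extends to an entire function and satisfies a functional equation relating $s$ to $1-s$, with an analytic conductor polynomial in $\mathcal{N}(\mathfrak{q})$ and the frequency parameter of the infinite part. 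Logarithmic differentiation of the Euler product gives $-L'/L(s,\chi_{\text{Hecke}}) = \sum_{\mathfrak{a}} \chi_{\text{Hecke}}(\mathfrak{a})\Lambda(\mathfrak{a})\mathcal{N}(\mathfrak{a})^{-s}$ for $\mathrm{Re}(s)>1$.

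First I would apply a truncated Perron formula to write
\[
\sum_{\mathcal{N}(\mathfrak{a})\le X} \chi_{\text{Hecke}}(\mathfrak{a})\,\Lambda(\mathfrak{a}) = -\frac{1}{2\pi i}\int_{c-iT}^{c+iT}\frac{L'}{L}(s,\chi_{\text{Hecke}})\frac{X^s}{s}\,ds + \mathcal{E}(X,T),
\]
with $c=1+1/\log X$ and $T$ to be chosen, and $\mathcal{E}(X,T)$ a standard truncation error. Then I would shift the contour to a vertical line $\sigma=1-\delta$ just inside the known zero-free region for $L(s,\chi_{\text{Hecke}})$, picking up residues at any zeros in the strip. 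The horizontal and vertical segments are controlled using the standard convexity/functional-equation bounds on $L'/L$, combined with the de la Vall\'ee-Poussin-type zero-free region for Hecke $L$-functions established by Landau and refined by Mitsui: there exists an absolute constant $c_0>0$ such that $L(s,\chi_{\text{Hecke}})$ has no zeros with $\sigma > 1 - c_0/\log(\mathcal{N}(\mathfrak{q})(|t|+2))$, except possibly for a single real \emph{Siegel zero} $\beta_0$ attached to a real Hecke character.

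To neutralize this possible exceptional zero, I would invoke the Fogels--Siegel theorem for Hecke $L$-functions: for every $\epsilon>0$ there is an ineffective constant $c(\epsilon)>0$ such that $\beta_0 \le 1 - c(\epsilon)\mathcal{N}(\mathfrak{q})^{-\epsilon}$. Choosing $T=\exp(c_1\sqrt{\log X})$ and $\delta$ of the same order, and using the hypothesis $\mathcal{N}(\mathfrak{q}) \le \log^A X$ to translate the $\mathcal{N}(\mathfrak{q})^{-\epsilon}$ factor into an acceptable power of $\log X$, one checks that both the shifted contour integral and the contribution of the possible Siegel zero are bounded by $X\exp(-C\sqrt{\log X})$ for some $C=C(A)>0$, yielding the stated estimate.

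The principal obstacle is the handling of the possible Siegel zero, which is inherently ineffective and forces $C=C(A)$ to be a non-explicit constant; no effective substitute is currently known in this generality. A technical subtlety specific to real quadratic fields is that the infinite part $\chi_\infty$ carries a frequency parameter $v = \pi(n+\rho)/\log\epsilon$ that may be large, so one must verify that the convexity bounds and the zero-free region depend on $v$ only through an analytic conductor which, in the range $\mathcal{N}(\mathfrak{q})\le \log^A X$ and for the relevant range of $T$, is absorbed into the implicit constants. Once these ingredients are assembled, the remainder of the argument is a routine adaptation of the classical Siegel-Walfisz proof.
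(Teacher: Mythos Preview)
The paper does not give a proof of this proposition at all; it simply cites \cite[Chapter~5]{IwKo}. Your sketch is precisely the classical argument that one finds in that reference (Perron, contour shift into the standard zero-free region, Siegel's ineffective bound on the exceptional zero), so in substance your proposal and the paper's ``proof'' agree.

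One remark is worth making. You correctly flag the dependence on the archimedean frequency parameter $v=\pi(n+\rho)/\log\epsilon$ as a technical subtlety, and in fact this point deserves more care than either you or the paper give it. The proposition as stated bounds only $\mathcal{N}(\mathfrak{q})$, not the full analytic conductor, yet the zero-free region and the convexity bounds for $L'/L$ genuinely depend on $|v|$. In the paper's application (subsection~3.6) one has $|n|\le x$, so the archimedean conductor can be as large as $x$; a literal reading of the proposition would then not suffice. What actually saves the day is that the relevant analytic conductor is still at most a fixed power of $x$, so $\log(\mathrm{conductor})\ll \log x$, and the standard zero-free region $\sigma>1-c_0/\log(\mathfrak{q}(|t|+|v|+2))$ still yields a saving of the shape $\exp(-C\sqrt{\log x})$ after choosing $T$ and $\delta$ appropriately. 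Your last paragraph gestures at this, but the phrase ``absorbed into the implicit constants'' is not quite accurate: the dependence on $v$ is not absorbed into constants but rather into the $\log x$ appearing under the square root. If you were writing this out in full, that is the place where precision is needed.
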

\begin{proof}
See \cite[Chapter 5]{IwKo}.
\end{proof}

\subsection{Vaughan's identity} 
To treat sums of the form in \eqref{in} if $\mathcal{N}(\mathfrak{q})>\log^A x$, we use Vaughan’s identity for $\mathcal{I}_{\mathbb{K}}$ to decompose the von Mangoldt function. 
\begin{lemma}[Vaughan's identity for number fields] \label{Vaughanideals} 
Let $\mathfrak{n}\in \mathcal{I}_\mathbb{K}$ and $U, V\ge 1$ such that $U < \mathcal{N}(\mathfrak{n})$. Then 
$$ 
\Lambda(\mathfrak{n})= a_1(\mathfrak{n}) + a_2(\mathfrak{n})+ a_3(\mathfrak{n}), 
$$ 
where  
$$ 
a_1(\mathfrak{n}):=- \sum\limits_{\substack{ \mathfrak{r}\mathfrak{m}\mathfrak{l}=\mathfrak{n}\\ \mathcal{N}(\mathfrak{m}) \le V\\ \mathcal{N}(\mathfrak{l}) \le U}} \mu(\mathfrak{m}) \Lambda(\mathfrak{l}), 
$$ 
$$ 
a_2(\mathfrak{n}):=\sum\limits_{\substack{\mathfrak{l} \mathfrak{r}=\mathfrak{n} \\ \mathcal{N}(\mathfrak{l}) \le V}}\mu(\mathfrak{l})\log( \mathcal{N}(\mathfrak{r})) 
$$ 
and 
$$ a_3(\mathfrak{n}):= -\sum\limits_{\substack{ \mathfrak{m}\mathfrak{r}=\mathfrak{n} \\ \mathcal{N}(\mathfrak{m}) > U \\ \mathcal{N}(\mathfrak{r}) >1}} \Lambda(\mathfrak{m}) \sum\limits_{\substack{\mathfrak{d} |\mathfrak{r} \\ \mathcal{N}(\mathfrak{d}) \le V}} \mu(\mathfrak{d}), 
$$ 
with the sums above running over ideals in $\mathcal{I}_{\mathbb{K}}$.  
\end{lemma}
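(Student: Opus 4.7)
The plan is to port the classical proof of Vaughan's identity verbatim from $\mathbb{Z}$ to $\mathcal{I}_\mathbb{K}$. The transfer is essentially mechanical because the two elementary identities that drive Vaughan's decomposition --- namely $\mu \ast 1 = \delta$ (M\"obius inversion) and $\Lambda = \mu \ast \log$, where $\log(\mathfrak{n}):=\log\mathcal{N}(\mathfrak{n})$ --- hold on $\mathcal{I}_\mathbb{K}$ as formal consequences of unique factorization into prime ideals, which is available since $\mathcal{O}_\mathbb{K}$ is a Dedekind domain.

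Concretely, I would begin from the formal Dirichlet-series identity
\begin{equation*}
-\frac{\zeta_\mathbb{K}'}{\zeta_\mathbb{K}} \;=\; -\zeta_\mathbb{K}'\cdot A \;+\; B \;-\; B\,\zeta_\mathbb{K}\,A \;+\; \Bigl(-\frac{\zeta_\mathbb{K}'}{\zeta_\mathbb{K}} - B\Bigr)\bigl(1 - \zeta_\mathbb{K}\,A\bigr),
\end{equation*}
where $A(s):=\sum_{\mathcal{N}(\mathfrak{m})\le V}\mu(\mathfrak{m})\mathcal{N}(\mathfrak{m})^{-s}$ and $B(s):=\sum_{\mathcal{N}(\mathfrak{l})\le U}\Lambda(\mathfrak{l})\mathcal{N}(\mathfrak{l})^{-s}$; this is purely algebraic and checked by expanding the final bracket. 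Extracting the coefficient of $\mathcal{N}(\mathfrak{n})^{-s}$ on both sides and identifying each factor with a Dirichlet convolution on $\mathcal{I}_\mathbb{K}$ then yields a pointwise identity on ideals.

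Term by term: the second summand $-\zeta_\mathbb{K}'\cdot A$ contributes $\sum_{\mathfrak{l}\mathfrak{r}=\mathfrak{n},\,\mathcal{N}(\mathfrak{l})\le V}\mu(\mathfrak{l})\log\mathcal{N}(\mathfrak{r})=a_2(\mathfrak{n})$; the triple convolution $-B\,\zeta_\mathbb{K}\,A$ unpacks to $a_1(\mathfrak{n})$; the lone term $B$ yields $\Lambda(\mathfrak{n})\,I_{\mathcal{N}(\mathfrak{n})\le U}$, which vanishes by the hypothesis $U<\mathcal{N}(\mathfrak{n})$. Finally, the factor $1-\zeta_\mathbb{K}\,A$ corresponds to the arithmetic function $\mathfrak{r}\mapsto \delta(\mathfrak{r})-\sum_{\mathfrak{d}\mid\mathfrak{r},\,\mathcal{N}(\mathfrak{d})\le V}\mu(\mathfrak{d})$, which (by M\"obius inversion) vanishes on every $\mathfrak{r}$ with $\mathcal{N}(\mathfrak{r})\le V$, and in particular at $\mathfrak{r}=(1)$; convolving it with $\Lambda(\mathfrak{m})\,I_{\mathcal{N}(\mathfrak{m})>U}$ reproduces $a_3(\mathfrak{n})$, the side condition $\mathcal{N}(\mathfrak{r})>1$ in the definition of $a_3$ coming exactly from the vanishing at $\mathfrak{r}=(1)$.

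There is no genuine obstacle here beyond careful bookkeeping; the only step worth double-checking is this cancellation at $\mathfrak{r}=(1)$, which would otherwise contribute an unwanted $\Lambda(\mathfrak{n})\,I_{\mathcal{N}(\mathfrak{n})>U}$ summand. Since $\delta((1))=\mu((1))=1$, the bracket $\delta(\mathfrak{r})-\sum_{\mathfrak{d}\mid\mathfrak{r},\,\mathcal{N}(\mathfrak{d})\le V}\mu(\mathfrak{d})$ does vanish at $(1)$, and the decomposition $\Lambda(\mathfrak{n})=a_1(\mathfrak{n})+a_2(\mathfrak{n})+a_3(\mathfrak{n})$ follows for all $\mathfrak{n}$ with $\mathcal{N}(\mathfrak{n})>U$.
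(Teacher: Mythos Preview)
Your proposal is correct and is precisely the standard argument the paper has in mind: the paper does not give a proof beyond stating that it is ``analogous to that of the classical Vaughan identity'' and citing \cite[Page~194]{Bru}, and your Dirichlet-series decomposition with truncations $A$ and $B$ is exactly that classical proof transported to $\mathcal{I}_\mathbb{K}$. The bookkeeping (identifying $-\zeta_\mathbb{K}'A$, $-B\zeta_\mathbb{K}A$, $B$, and $(-\zeta_\mathbb{K}'/\zeta_\mathbb{K}-B)(1-\zeta_\mathbb{K}A)$ with $a_2$, $a_1$, the vanishing term, and $a_3$ respectively) is carried out accurately, including the cancellation at $\mathfrak{r}=(1)$; the only slip is cosmetic --- you call $-\zeta_\mathbb{K}'A$ the ``second summand'' when it is the first in your display.
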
 
\begin{proof}
The proof is analogous to that of the classical Vaughan identity. For the details, see \cite[Page 194]{Bru}.
\end{proof}

Let $\mathbf{G} :\mathcal{I}_{\mathbb{K}} \to \mathbb{C}$ be a function such that $\mathbf{G}(\mathfrak{a})=0$ if $\mathcal{N}(\mathfrak{a})\le U$. Then it follows from Lemma \ref{Vaughanideals} that 
\begin{equation} \label{Lambdasplit} 
\sum\limits_{\mathcal{N}(\mathfrak{n})\le X} \Lambda(\mathfrak{n}) \mathbf{G}(\mathfrak{n}) =S_1 + S_2 + S_3, 
\end{equation} 
where $$ S_i= \sum\limits_{\mathcal{N}(\mathfrak{n})\le X} \mathbf{G}(\mathfrak{n}) a_i(\mathfrak{n}).$$ 
We may bound the sums $S_1$ and  $S_2$ by (compare with \cite[page 95]{Bru})  
\begin{equation} \label{theS1}
\begin{split} 
S_1 &\ll (\log UV) \sum\limits_{ \mathcal{N}(\mathfrak{t})\le UV} \Bigg| \sum\limits_{\substack{ \mathcal{N}(\mathfrak{r}) \le X/\mathcal{N}(\mathfrak{t})}} \mathbf{G}(\mathfrak{t}\mathfrak{r}) \Bigg|
\end{split} 
\end{equation} 
and 
\begin{equation} \label{theS2}
\begin{split} 
S_2 &\ll (\log X) \sum\limits_{\mathcal{N}(\mathfrak{l}) \le V} \max\limits_{w\le X/\mathcal{N}(l)} \Bigg| \sum\limits_{\substack{ \mathcal{N}(\mathfrak{r})\le w}} \mathbf{G}(\mathfrak{l}\mathfrak{r})\Bigg|,
\end{split} 
\end{equation}
where we remove the term $\log \mathcal{N}(\mathfrak{r})$ implicit in the sum $S_2$ using partial summation.  
The sum $S_3$ can be expressed in the form
\begin{equation} \label{theS3} 
\begin{split} 
S_3 & = - \sum\limits_{U<\mathcal{N}(\mathfrak{m})\le X/ V} \quad \sum\limits_{V < \mathcal{N}(\mathfrak{r}) \le X/ \mathcal{N}(\mathfrak{m})} \Lambda(\mathfrak{m})H(\mathfrak{r}) \mathbf{G}(\mathfrak{m}\mathfrak{r}), 
\end{split} 
\end{equation} 
where we set
\begin{equation} \label{Hdef} 
H(\mathfrak{r}):=\sum\limits_{\substack{\mathfrak{d}|\mathfrak{r}\\\mathcal{N}(\mathfrak{d})\le V}}\mu(\mathfrak{d}) 
\end{equation} 
and note that $H(\mathfrak{r}) = 0$ for $1<\mathcal{N}(\mathfrak{r})\le V$. 

\subsection{Duality principle} 
The following lemma will be used to derive a large sieve inequality for Hecke characters.
\begin{lemma}[Duality principle] \label{Duality}
Let $\Phi : \mathbb{Z}^2 \to \mathbb{C}$ be a complex-valued function. Suppose that for any sequence of complex numbers $(\beta_n)_{n \le N}$,
$$
\sum\limits_{m\le M}\Big|\sum\limits_{n\le N}\beta_n \Phi(m,n)\Big|^2\le \Delta(M,N) \sum\limits_{n\le N} |\beta_n|^2.
$$
Then for any sequence of complex numbers $\alpha_m$,
$$
\sum\limits_{n\le N}\Big|\sum\limits_{m\le M}\alpha_m \Phi(m,n)\Big|^2\le \Delta(M,N) \sum\limits_{m\le M} |\alpha_m|^2,
$$
where $\Delta(M,N)$ is the same quantity in both inequalities.
\end{lemma}
\begin{proof}
See \cite[section $7.1$]{IwKo}.
\end{proof}

\subsection{Cosmetic surgery}
The following lemma, termed "cosmetic surgery" by some authors, will be used to remove certain summation conditions in the form of inequalities between variables. 
\begin{lemma}[Cosmetic surgery] \label{LemmaCosmetic}
For any $T\ge 1$ and two distinct real numbers $\alpha, \beta>0$, we have
$$
1_{\alpha<\beta}= \int_{-T}^{T}e^{it\alpha}\frac{\sin t\beta}{\pi t}dt +O\Big(\frac{1}{T|\beta-\alpha|}\Big),
$$
where the implied constant is absolute.
\end{lemma}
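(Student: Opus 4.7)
\emph{Proof plan.} The natural approach is to first evaluate the integral over all of $\mathbb{R}$ (which should equal $1_{\alpha<\beta}$ exactly) and then estimate the tails $\int_{|t|>T}$ by a single integration by parts. To this end I would split $\sin(t\beta)=(e^{it\beta}-e^{-it\beta})/(2i)$ so that
$$
e^{it\alpha}\,\frac{\sin(t\beta)}{\pi t} \;=\; \frac{1}{2\pi i}\left(\frac{e^{it(\alpha+\beta)}}{t}-\frac{e^{it(\alpha-\beta)}}{t}\right).
$$
The left-hand side is continuous at $t=0$ (with value $\beta/\pi$), so the two singular pieces on the right must be paired: each one is interpreted as a Cauchy principal value at the origin, and on the symmetric interval $[-T,T]$ the cancellation is automatic.

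Next I would invoke the classical identity
$$
\frac{1}{2\pi i}\,\mathrm{p.v.}\!\int_{-\infty}^{\infty}\frac{e^{itx}}{t}\,dt \;=\; \frac{1}{2}\,\mathrm{sgn}(x),\qquad x\in\mathbb{R}\setminus\{0\},
$$
which follows at once from the Dirichlet integral $\int_0^{\infty}(\sin u)/u\,du=\pi/2$. Applying this with $x=\alpha+\beta>0$ and $x=\alpha-\beta\neq 0$ and subtracting yields
$$
\int_{-\infty}^{\infty} e^{it\alpha}\,\frac{\sin(t\beta)}{\pi t}\,dt \;=\; \tfrac{1}{2}\bigl(\mathrm{sgn}(\alpha+\beta)-\mathrm{sgn}(\alpha-\beta)\bigr) \;=\; 1_{\alpha<\beta},
$$
using the positivity of $\alpha$ and $\beta$ and the assumption $\alpha\neq\beta$.

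For the tail, one step of integration by parts with $u=1/t$ and $dv=e^{itx}\,dt$ gives
$$
\int_T^{\infty}\frac{e^{itx}}{t}\,dt \;=\; -\frac{e^{iTx}}{ixT}+\frac{1}{ix}\int_T^{\infty}\frac{e^{itx}}{t^2}\,dt \;\ll\;\frac{1}{|x|T},
$$
and symmetrically on $(-\infty,-T]$. Feeding in $x=\alpha+\beta$ and $x=\alpha-\beta$ produces tail contributions of order $1/((\alpha+\beta)T)$ and $1/(|\alpha-\beta|T)$ respectively. Since $\alpha,\beta>0$ we have $\alpha+\beta\ge|\alpha-\beta|$, so both are absorbed in $O(1/(T|\beta-\alpha|))$ with an absolute implied constant, completing the proof.

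I do not anticipate any real obstacle: the only mildly delicate point is the bookkeeping of principal values once the integrand is split into two singular pieces, but the symmetric truncation $[-T,T]$ and the smoothness of the sum at the origin make this transparent. The substance of the argument is just the Dirichlet integral combined with one integration by parts.
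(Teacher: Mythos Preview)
Your argument is correct and is the standard proof of this estimate: evaluate the full integral via the Dirichlet integral and bound the tails by one integration by parts, with the $\alpha+\beta$ tail absorbed into the $|\alpha-\beta|$ tail. The paper does not supply its own proof but simply cites Harman's book, so your write-up in fact provides more detail than the paper; there is nothing to compare beyond that.
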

\begin{proof}
See \cite[Lemma 2.2]{HarmanBook}.
\end{proof}

\section{Reduction to sums with Hecke characters}

\subsection{Picking out residue classes} \label{Picking}
Using the orthogonality relations for Dirichlet characters, we begin with writing $\psi_S(y;\mathfrak{q},a)$ in the form
\begin{equation}
\psi_S(y;\mathfrak{q},a)=\frac{1}{\varphi(\mathfrak{q})}\sum\limits_{\chi\bmod\mathfrak{q}}\overline{\chi(a)} \psi_S(y,\chi),
\end{equation}
where 
\begin{equation}\label{Def psiS 0}
\psi_S(y,\chi):=\sum\limits_{s\in S(\eta_1,\eta_2;y)} \chi(s)\Lambda(s).
\end{equation}
Next, we set
\begin{equation}\label{Def E tilde}
\Tilde{E}_S(y;\mathfrak{q},a):= \psi_S(y;\mathfrak{q},a)-\frac{1}{\varphi(\mathfrak{q})}\sum\limits_{\substack{s\in S(\eta_1,\eta_2;y)\\ (s,\mathfrak{q})=1}} \Lambda(s)= \frac{1}{\varphi(\mathfrak{q})}\sum\limits_{\substack{\chi\bmod\mathfrak{q}\\ \chi\not=\chi_0}}\overline{\chi(a)} \psi_S(y,\chi),  
\end{equation}
where $\chi_0$ is the principal Dirichlet character modulo $\mathfrak{q}$. Our strategy for the proof of Theorem \ref{main} will be to work out average bounds for $\Tilde{E}_S(y;\mathfrak{q},a)$ and finally approximate $\Tilde{E}_S(y;\mathfrak{q},a)$ by $E_S(y;\mathfrak{q},a)$, defined in \eqref{def ES}.  To this end, we set
\begin{equation}
\tilde{E}_S(y;\mathfrak{q}):= \max\limits_{\substack{a\bmod \mathfrak{q}\\(a,\mathfrak{q})=1}}|\Tilde{E}_S(y;\mathfrak{q},a)|
\end{equation}
and
\begin{equation}
E_S^\ast(x;\mathfrak{q}):=\max_{y\le x}\tilde{E}_S(y;\mathfrak{q}).
\end{equation}  
Our immediate aim is to bound the sum
$$
\sum\limits_{\substack{\mathfrak{q}\in \mathcal{I}_{\mathbb{K}}\\ 2\le \mathcal{N}(\mathfrak{q})\le Q}} 
E_S^\ast(x;\mathfrak{q}).
$$
Here, the summation condition $\mathcal{N}(\mathfrak{q})\ge 2$ comes from the fact that $E_S^{\ast}(x;(1))=0$ since there is no non-principal Dirichlet character modulo $(1)$.

By the triangle inequality, we have
\begin{equation} \label{Bdd of E tilde}
|\Tilde{E}_S(y;\mathfrak{q},a)|\le \frac{1}{\varphi(\mathfrak{q})}\sum\limits_{\substack{\chi \bmod \mathfrak{q}\\ \chi \neq \chi_0}} |\psi_S(y,\chi)|.  
\end{equation}
Since the right-hand side is independent of the residue class $a$ modulo $\mathfrak{q}$,  it follows that
\begin{equation}\label{E* bdd1}
E_S^\ast(x;\mathfrak{q})\le\frac{1}{\varphi(\mathfrak{q})}\sum\limits_{\substack{\chi \bmod \mathfrak{q}\\ \chi\neq\chi_0}} \max_{y\le x}|\psi_S(y,\chi)|. 
\end{equation}

\subsection{Reduction to primitive characters}
Let $\chi \bmod \mathfrak{q}$ be a non-principal Dirichlet character and $\chi_1 \bmod \mathfrak{q}_1$ with $\mathfrak{q}_1|\mathfrak{q}$ be the primitive character inducing $\chi$. We observe that $\mathfrak{q}_1\not=(1)$
since $\chi$ is non-principal. It is easy to see that 
$$
\psi_S(y,\chi)=\psi_S(y,\chi_1)+O\left(\log^2(\mathcal{N}(\mathfrak{q})y)\right). 
$$
Consequently, from \eqref{E* bdd1}, we get 
\begin{equation*}
\sum\limits_{2\le \mathcal{N}(\mathfrak{q})\le Q}E^\ast(x;\mathfrak{q})\ll\sum\limits_{2\le \mathcal{N}(\mathfrak{q}_1)\le Q} \;\; \sideset{}{^\ast}\sum\limits_{\chi_1 \bmod \mathfrak{q}_1}\max\limits_{y\le x}|\psi_S(y,\chi_1)|\Bigg(\sum\limits_{1\le\mathcal{N}(\mathfrak{j})\le Q/\mathcal{N}(\mathfrak{q}_1)}\frac{1}{\varphi(\mathfrak{j}\mathfrak{q}_1)}\Bigg)+Q\log^2(Qx),
\end{equation*}
where the asterisk indicates that the relevant summation is restricted to primitive characters. 
We note that $\varphi(\mathfrak{j}\mathfrak{q}_1)\ge\varphi(\mathfrak{j})\varphi(\mathfrak{q})$, which implies 
$$
\sum\limits_{1\le\mathcal{N}(\mathfrak{j})\le Q/\mathcal{N}(\mathfrak{q}_1)} \frac{1}{\varphi(\mathfrak{j}\mathfrak{q})}\le\frac{1}{\varphi(\mathfrak{q})}\sum\limits_{1\le\mathcal{N}(\mathfrak{j})\le Q/\mathcal{N}(\mathfrak{q}_1)} \frac{1}{\varphi(\mathfrak{j})} \ll \frac{\log Q}{\varphi(\mathfrak{q})}.
$$
Assuming $Q\le x$, it follows that
\begin{equation}\label{E*primitive}
\sum\limits_{2\le \mathcal{N}(\mathfrak{q})\le Q} E^\ast(x;\mathfrak{q})\ll (\log x)\sum\limits_{2\le \mathcal{N}(\mathfrak{q})\le Q}\frac{1}{\varphi(\mathfrak{q})} \;\; \sideset{}{^\ast}\sum\limits_{\chi \bmod \mathfrak{q}}\max\limits_{y\le x}|\psi_S(y,\chi)|+Q\log^2 x.
\end{equation}

\subsection{Writing Dirichlet characters in terms of Hecke characters}
For a primitive Dirichlet character $\chi$ modulo $\mathfrak{q}$, let $\chi_{\text{Hecke}}$ be a Hecke character belonging to $\chi$.  Recalling \eqref{Heckedirichlet} and \eqref{Def psiS 0}, we may express $\psi_S(y,\chi)$ in the form
\begin{equation} \label{defpsichi2}
\psi_S(y,\chi):=\sum\limits_{s\in S(\eta_1,\eta_2;y)}\Lambda(s) \chi_{\text{Hecke}}((s))\overline{\chi}_\infty(s).
\end{equation}
Recalling \eqref{infinitypart} and $s>0$ if $s\in S(\eta_1,\eta_2;q)$, we have
\begin{equation} \label{inftypart 2}
\overline{\chi}_\infty(s)=\mbox{sgn}(\sigma_2(s))^{-u_2(\chi)} \left|\frac{\sigma_1(s)}{\sigma_2(s)} \right|^{-i \pi (m(\chi) +\rho(\chi))/\log \epsilon},  
\end{equation}
where $u_2(\chi)\in \{0,1\}$, $m(\chi)\in \mathbb{Z}$ and $\rho(\chi)$ and $g$ are as given in \eqref{rhodef} and \eqref{gdef}.
We are free to take $m(\chi)=0$ and $u_2(\chi)=0$, and hence $g=0$. With this choice, \eqref{inftypart 2} takes the form 
$$
\overline{\chi}_\infty(s)=\left| \frac{\sigma_1(s)}{\sigma_2(s)} \right|^{i\arg(\chi(\epsilon))/(2\log\epsilon)},
$$
and hence \eqref{defpsichi2} becomes 
\begin{equation}\label{transformed}
\psi_S(y,\chi)= \sum\limits_{s\in S(\eta_1,\eta_2;y)}\Lambda(s) \chi_{\text{Hecke}}((s))\left| \frac{\sigma_1(s)}{\sigma_2(s)} \right|^{i\arg(\chi(\epsilon))/(2\log\epsilon)}.
\end{equation}

\subsection{Picking out sectors} \label{sectorspicking}
Using the definitions of $S(\eta_1,\eta_2)$ and $S(\eta_1,\eta_2;y)$ in \eqref{def S} and \eqref{Seta1eta2x}, and recalling that $-1\le \eta_1<\eta_2\le 1$, the summation condition $s\in S(\eta_1,\eta_2;y)$ in \eqref{transformed} is equivalent to 
$$
\epsilon^{\eta_1}\le \left| \frac{\sigma_1(s)}{\sigma_2(s)} \right|\le \epsilon^{\eta_2} \quad \mbox{and} \quad s\in S(-1,1;y).
$$ 
Hence, we may write \eqref{transformed} in the form
\begin{equation}\label{transformed2}
\psi_S(y,\chi)= \sum\limits_{s\in S(-1,1;y)}\Lambda(s) \chi_{\text{Hecke}}((s))f_{\chi}(s),
\end{equation}
where 
$$
f_{\chi}(s):= \left| \frac{\sigma_1(s)}{\sigma_2(s)}\right|^{i\arg(\chi(\epsilon))/(2\log\epsilon)} I_{(\epsilon^{\eta_1},\epsilon^{\eta_2}]}\left(\left| \frac{\sigma_1(s)}{\sigma_2(s)} \right|\right).
$$
We will approximate $f(s)$ by a linear combination of terms of the form $\xi^n(s)$ with $n\in \mathbb{Z}$, where 
\begin{equation}\label{Def Xi}
\xi(\alpha):=\left| \frac{\sigma_1(\alpha)}{\sigma_2(\alpha)} \right|^{i\pi /\log\epsilon}\quad \mbox{for } \alpha\in \mathbb{K}.  
\end{equation}
We note that $\xi(\alpha)=\xi(\alpha u)$ for any unit $u$ in $\mathcal{O}_\mathbb{K}^{\ast}$, and thus 
\begin{equation*}
\xi_{\text{Hecke}}((\alpha)):=\xi(\alpha)
\end{equation*}
is well-defined and a Hecke character modulo $(1)$ on the principal fractional ideals $(\alpha)$. This will allow us to approximate $\psi_S(y,\chi)$ by a linear combination of sums with Hecke characters of the form
\begin{equation} \label{finallyHecke1}
\sum\limits_{s\in S(-1,1;y)}\Lambda(s) (\chi_{\text{Hecke}} \xi_{\text{Hecke}}^n) ((s)).
\end{equation}
Moreover, using Lemma \ref{sigma12}, the sum in \eqref{finallyHecke1} equals 
\begin{equation} \label{finallyHecke2}
\sum\limits_{\substack{\mathfrak{s}\in \mathcal{P}_{\mathbb{K}}\\ \mathcal{N}(\mathfrak{s})\le y}}\Lambda(\mathfrak{s}) (\chi_{\text{Hecke}} \xi_{\text{Hecke}}^n) (\mathfrak{s}).
\end{equation}

We set
\begin{equation} \label{Def Tau}
   \tau=\tau_{\chi}:=\frac{\arg(\chi(\epsilon))}{2\pi}\in \big(-1/2,1/2\big] 
\end{equation}
and
\begin{equation}\label{def W}
W=W(s):= \frac{\log|\sigma_1(s)/\sigma_2(s)|}{2\log\epsilon}.   
\end{equation}
We note that 
$$
\epsilon^{\eta_1}<\left|\frac{\sigma_1(s)}{\sigma_2(s)}\right|\le \epsilon^{\eta_2} \Longleftrightarrow \frac{\eta_1}{2}<W\le \frac{\eta_2}{2}
$$
and hence
\begin{equation} \label{fF}
f_{\chi}(s)=e(\tau W)I_{(\eta_1/2,\eta_2/2]}(W)=:F_{\chi}(W).
\end{equation}
We now write $F(W)=F_{\chi}(W)$ for brevity and view $F(W)$ as a function in an indeterminate $W$. We note that $F(W)=0$ if $W\in (-1/2,1/2]\setminus (\eta_1/2,\eta_2/2]$. On the interval $(-1/2,1/2]$, we may develop $F(W)$ into a Fourier series 
$$
\sum\limits_{n\in \mathbb{Z}}a_ne(nW),
$$
where 
\begin{equation*} 
a_n=a_n(\chi) =\int\limits_{-1/2}^{1/2}e(\tau z)I_{(\eta_1/2,\eta_2/2]}(z)e(-nz)\mbox{d}z= \int\limits_{\eta_1/2}^{\eta_2/2}e((\tau-n)z)\mbox{d}z.
\end{equation*} 
The equation 
\begin{equation}\label{FouriSeries}
F(W)=\sum\limits_{n\in \mathbb{Z}}a_ne(nW)
\end{equation}
is valid for all $W\in (1/2,1/2]$ except for $W=\eta_1/2,\eta_2/2$, the discontinuities of the function $F(W)$. For $n\neq \tau$, we calculate that 
\begin{equation*}
a_n = \frac{e((\tau - n) (\eta_2/2)) - e((\tau - n) (\eta_1/2))}{2\pi i (\tau - n)}.
\end{equation*}
If $n=\tau$, then since $\tau\in (-1/2,1/2]$, we have $n=0$ and  $a_n=a_0=1$.
We note that 
\begin{equation} \label{bound an}
 a_n\ll \frac{1}{1+|n|} \quad \mbox{for all } n\in \mathbb{Z},  
\end{equation}
where the implied constant is absolute.
The following lemma provides a truncated version of the above Fourier series expansion. 
\begin{lemma}\label{fourierapprox}
For any $Z\ge 2$ and $W\in (1/2,1/2]$, we have
\begin{equation} \label{fzcutoff}
F(W)= \sum\limits_{|n|\le Z} a_n e(nW) + O(T_Z(W)), 
\end{equation}
where
$$
T_Z(W):=\min\left\{\log Z, \frac{1}{Z||W-\eta_1/2||}+ \frac{1}{Z||W-\eta_2/2||}\right\}.
$$
Above, the implied constant is absolute, and $||X||$ denotes the distance of $X\in \mathbb{R}$ to the nearest integer. 
\end{lemma}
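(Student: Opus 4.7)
The plan is to establish two separate upper bounds on $F(W)-\sum_{|n|\le Z}a_n e(nW)$ and then combine them via the minimum to recover $T_Z(W)$.

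The first bound $O(\log Z)$ follows from a trivial triangle-inequality argument: using the estimate $|a_n|\ll 1/(1+|n|)$ supplied by \eqref{bound an}, one has $\sum_{|n|\le Z}|a_n|\ll \log Z$, and since $|F(W)|\le 1$ as well, one obtains
$$\Bigl| F(W)-\sum_{|n|\le Z}a_n e(nW)\Bigr|\ll \log Z.$$

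For the sharper bound, the key point is that $F$ is of bounded variation on $(-1/2,1/2]$ with jump discontinuities only at $W=\eta_1/2$ and $W=\eta_2/2$, so by Dirichlet's pointwise convergence theorem the full Fourier series $\sum_{n\in\mathbb{Z}}a_n e(nW)$ converges to $F(W)$ at every continuity point. Since the second bound is vacuous at the jump points themselves, it suffices to control the tail $\sum_{|n|>Z}a_n e(nW)$ under this convergence. Using the explicit formula for $a_n$ given just before the lemma, one rewrites
$$a_n e(nW)=\frac{e(\tau\eta_2/2)\,e(n(W-\eta_2/2))-e(\tau\eta_1/2)\,e(n(W-\eta_1/2))}{2\pi i(\tau-n)},$$
so the tail decomposes into two sums of the shape $\sum_{|n|>Z}e(n\gamma)/(\tau-n)$ with $\gamma=W-\eta_j/2$ for $j=1,2$. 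For $|n|>Z\ge 2>|\tau|$, the weight $1/(\tau-n)$ is monotone on each of the ranges $n>Z$ and $n<-Z$ and satisfies $|1/(\tau-n)|\ll 1/|n|$. Combining this with the standard geometric-sum estimate $|\sum_{Z<n\le N}e(n\gamma)|\ll 1/\|\gamma\|$ (and its counterpart for $n<-Z$) via Abel summation yields
$$\sum_{|n|>Z}\frac{e(n\gamma)}{\tau-n}\ll \frac{1}{Z\|\gamma\|}.$$
Summing over the two values of $\gamma$ gives
$$\Bigl| F(W)-\sum_{|n|\le Z}a_n e(nW)\Bigr|\ll \frac{1}{Z\|W-\eta_1/2\|}+\frac{1}{Z\|W-\eta_2/2\|}.$$

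Taking the minimum of the two bounds produces exactly the definition of $T_Z(W)$, which completes the plan. The only slightly delicate part is the Abel-summation computation on the tail, but it is routine once the decomposition above is in place; near the two discontinuities, where $\|W-\eta_j/2\|$ vanishes and the refined bound becomes useless, the $\log Z$ estimate takes over automatically via the minimum.
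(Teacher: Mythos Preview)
Your argument is correct and is precisely the ``standard partial summation argument'' the paper invokes (the paper defers all details to \cite[Lemma~2]{BDM24}). The two-step strategy---a trivial $\log Z$ bound via \eqref{bound an}, plus an Abel-summation estimate on the Fourier tail after splitting $a_n e(nW)$ into the two exponential pieces attached to $\eta_1/2$ and $\eta_2/2$---is exactly the intended route.
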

\begin{proof} This can be proved in a similar way as \cite[Lemma 2]{BDM24} by standard partial summation arguments. 
\end{proof}

We note that 
$$
e(nW(s))=\xi^n(s)=\xi_{\text{Hecke}}^n((s))
$$
and hence Lemma \ref{fourierapprox} and \eqref{fF} imply that 
$$
f_{\chi}(s)=\sum\limits_{|n|\le Z} a_n(\chi) \xi_{\text{Hecke}}^n((s)) + O(T_Z(W(s))).
$$
Combining this with \eqref{transformed2} and recalling the equality between the sums in \eqref{finallyHecke1} and \eqref{finallyHecke2}, we deduce that 
\begin{equation} \label{psiapprox}
\psi_S(y,\chi)= \sum\limits_{|n|\le Z} a_n(\chi) \sum\limits_{\substack{\mathfrak{s}\in \mathcal{P}_{\mathbb{K}}\\ \mathcal{N}(\mathfrak{s})\le y}}\Lambda(\mathfrak{s}) (\chi_{\text{Hecke}} \xi_{\text{Hecke}}^n) (\mathfrak{s})+O(\Sigma),
\end{equation}
where
$$
\Sigma:=\sum\limits_{s\in S(-1,1;x)} T_Z(W(s)).
$$

\subsection{Estimation of $\Sigma$}
To estimate $\Sigma$, we need to obtain information about the spacing of $W(s)$, as $s$ runs over the sector $S(-1,1;x)$.  Our method will be an extension of that in \cite[subsection 4.2.4.]{BDM24} where $s$ was restricted to prime elements. We divide the sector $S(-1,1;x)$, defined as in \eqref{Seta1eta2x}, into $O(\log x)$ dyadic parts 
\begin{equation} \label{thedefofPq}
\mathcal{S}(H):=\left\{s\in S(-1,1): H< \mathcal{N}(s)\le 2H\right\},
\end{equation}
where $H\ge 1$. 
We further split $\mathcal{S}(H)$ into three disjoint sets
\begin{equation}
\begin{split}
\mathcal{S}_0(H):= & \mathcal{S}(H)\cap \mathbb{Z},\\
\mathcal{S}_+(H):= & \{s\in \mathcal{S}(H)\setminus \mathbb{Z} : \sigma_1(s)/\sigma_2(s)>0\},\\
\mathcal{S}_-(H):= & \{s\in \mathcal{S}(H)\setminus \mathbb{Z} : \sigma_1(s)/\sigma_2(s)<0\}. 
\end{split}
\end{equation}
Now our goal is to estimate the sums
$$
\Sigma_0(H):=\sum\limits_{s\in \mathcal{S}_0(H)} T_Z(W(s)), \quad 
\Sigma_+(H):=\sum\limits_{s\in \mathcal{S}_+(H)} T_Z(W(s)), \quad
\Sigma_-(H):=\sum\limits_{s\in \mathcal{S}_-(H)} T_Z(W(s)). 
$$
If $s\in \mathcal{S}_0(H)$, then $s\in \mathbb{Z}$ and so $\mathcal{N}(s)=s^2$. Hence, using $T_Z(W(s))\le \log Z$, we have 
\begin{equation} \label{SN 0}
\Sigma_0(H) \ll H^{1/2}\log Z.
\end{equation}  
In the following, we estimate the sum $\Sigma_+(H)$. The sum $\Sigma_-(H)$ can be treated in an analogous way. 
Below we will consider only the case when $\mathbb{K}=\mathbb{Q}(\sqrt{d})$ with $d>1$ square-free and $d\equiv 2,3\bmod 4$ in which $\mathcal{O}_{\mathbb{K}}=\{u+v\sqrt{d}: u,v\in \mathbb{Z}\}$. The case when $d\equiv 1 \bmod{4}$ can be handled in a similar way.

For $s\in \mathcal{S}_+(H)$, let $A(s)$ be the number of $s'\in \mathcal{S}_+(H)$ such  that $W(s)=W(s')$ and set
\begin{equation} \label{mdef}
m:=\max\limits_{s\in \mathcal{S}_+(H)} A(s).
\end{equation}
We partition $\mathcal{S}_+(H)$ into $m$ subsetsets $\mathcal{M}$ such that $W(s_1)\not=W(s_2)$ for all $s_1, s_2 \in \mathcal{M}$ with $s_1\neq s_2$. Then, for each $\mathcal{M}$, we have
\begin{equation} \label{sumoverM} 
\sum\limits_{s\in \mathcal{M}} T_Z(W(s)) \ll \log Z+\frac{|\log l_+|}{Zl_+}, 
\end{equation}
where 
$$
l_+:=\min\limits_{\substack{s_1,s_2\in \mathcal{M}\\ s_1\not=s_2}} \left| W(s_1)-W(s_2)\right|.
$$
In the following, we establish a lower bound for $l_{+}$. 

Let $s_1,s_2\in \mathcal{M}$. Recalling the definition of $W(s)$ in \eqref{def W}, we see that
\begin{equation} \label{Wdif}
W(s_1)-W(s_2)= \frac{1}{2\log \epsilon}\cdot \log \frac{\sigma_1(s_1)\sigma_2(s_2)}{\sigma_2(s_1)\sigma_1(s_2)}. 
\end{equation}
Since $W(s_1)\not= W(s_2)$, it follows that 
\begin{equation} \label{nonzero}
\sigma_1(s_1)\sigma_2(s_2)-\sigma_2(s_1)\sigma_1(s_2)\not=0.
\end{equation}
We note that the bound 
$$
|\log x|\ge c|x-1|
$$
holds for a suitable constant $c=c(\epsilon)>0$ in the interval $[\epsilon^{-2},\epsilon^2]$, and hence,
$$
|W(s_1)-W(s_2)|\ge \frac{c}{2\log \epsilon}\cdot \frac{|\sigma_1(s_1)\sigma_2(s_2)-\sigma_2(s_1)\sigma_1(s_2)|}{|\sigma_2(s_1)\sigma_1(s_2)|}. 
$$ 
For $i=1,2$, let $s_i=a_i+b_i\sqrt{d}$ with $a_i,b_i\in \mathbb{Z}$. We calculate that 
$$
|\sigma_1(s_1)\sigma_2(s_2)-\sigma_2(s_1)\sigma_1(s_2)|=|2(b_1a_2-b_2a_1)\sqrt{d}|.
$$ 
This together with \eqref{nonzero} implies 
$$
|\sigma_1(s_1)\sigma_2(s_2)-\sigma_2(s_1)\sigma_1(s_2)|\ge 2\sqrt{d}.
$$
Using Lemma \ref{sigma12}, we deduce that
$$
|W(s_1)-W(s_2)|\ge \frac{c\sqrt{d}}{|\sigma_2(s_1)\sigma_1(s_2)|\log \epsilon}\ge 
 \frac{c\sqrt{d}}{\mathcal{N}(s_1s_2)^{1/2}\epsilon\log \epsilon}
\ge \frac{c\sqrt{d}}{2H\epsilon \log \epsilon}
$$
for any $s_1,s_2\in \mathcal{M}$ with $s_1\not=s_2$, and hence,
$$
l_+\ge \frac{c\sqrt{d}}{2H\epsilon \log \epsilon}.
$$
Recalling \eqref{sumoverM}, it follows that
\begin{equation*} 
\sum\limits_{s\in \mathcal{M}} T_Z(W(s)) \ll \log Z+\frac{H\log H}{Z} 
\end{equation*}
and hence,
\begin{equation} \label{SN b}
\Sigma_+(H) \ll m\left(\log Z+\frac{H\log H}{Z}\right),
\end{equation} 
with $m$ as defined in \eqref{mdef}. It remains to bound the said quantity $m$.
 
Fix $s_2 =u+v\sqrt{d}\in \mathcal{S}_+(H)$ with $u,v\in \mathbb{Z}$. We need to bound the number $A(s_2)$ of elements $s_1\in \mathcal{S}_+(H)$ such that $W(s_1)=W(s_2)$.  Using \eqref{Wdif}, we have
\begin{equation*}
W(s_1)=W(s_2)\Longleftrightarrow \sigma_1\left(\frac{s_1}{s_2}\right)=\sigma_2\left(\frac{s_1}{s_2}\right)
\Longleftrightarrow \frac{s_1}{s_2}\in \mathbb{Q}.   
\end{equation*}
So if $s_1,s_2\in \mathcal{S}_+(H)$ satisfy $W(s_1)=W(s_2)$, then $s_1/s_2=\mu_1/\mu_2$ for some $\mu_1,\mu_2\in \mathbb{Z}$ such that $\mu_2>0$ and $(\mu_1,\mu_2)=1$. We further observe that necessarily, $\mu_2|(u,v)$. Moreover, since 
$$
\frac{1}{4}\le \mathcal{N}\left(\frac{s_1}{s_2}\right)\le 4,
$$
we have
$$
\frac{\mu_2}{2}\le \mu_1\le 2\mu_2.
$$
Consequently,
$$
A(s_2) \ll \sum\limits_{\mu_2|(u,v)}\mu_2 \ll (u,v)^{1+\varepsilon}\ll H^{1/2+\varepsilon},
$$
which implies
\begin{equation} \label{mbound}
m\ll H^{1/2+\varepsilon}.
\end{equation}
Combining \eqref{SN b} and \eqref{mbound}, we get
\begin{equation*}  
\Sigma_+(H) \ll H^{1/2+\varepsilon}\left(\log Z+\frac{H\log H}{Z}\right).
\end{equation*}
In a similar way, we obtain a bound of the same quality for $\Sigma_-(H)$. Hence, taking \eqref{SN 0} into account, we obtain the estimate 
 \begin{equation*} 
\Sigma_0(H)+\Sigma_+(H)+\Sigma_-(H) \ll H^{1/2+\varepsilon}\left(\log Z+\frac{H\log H}{Z}\right),
\end{equation*}  
from which we deduce that
$$
\Sigma\ll x^{1/2+\varepsilon}\left(\log Z+\frac{x\log x}{Z}\right).
$$
Throughout the sequel, we take  $Z:=x$, which will be fine for our purposes. Thus we arrive at the bound
\begin{equation} \label{finalSigmabound}
\Sigma\ll x^{1/2+2\varepsilon}.
\end{equation}

\subsection{Application of the Siegel-Walfisz theorem} \label{SWT}
Combining \eqref{E*primitive}, \eqref{bound an}, \eqref{psiapprox} and \eqref{finalSigmabound}, and recalling our choice $Z=x$, we obtain 
\begin{equation}\label{intermediate}
\begin{split}
\sum\limits_{2\le \mathcal{N}(\mathfrak{q})\le Q} E^\ast(x;\mathfrak{q})\ll &  (\log x)\sum\limits_{2\le \mathcal{N}(\mathfrak{q})\le Q}\frac{1}{\varphi(\mathfrak{q})} \;\; \sideset{}{^\ast}\sum\limits_{\chi \bmod \mathfrak{q}}\ \sum\limits_{|n|\le x} \frac{1}{1+|n|} \times\\
& \max\limits_{y\le x} \Bigg|\sum\limits_{\substack{\mathfrak{s}\in \mathcal{P}_{\mathbb{K}}\\ \mathcal{N}(\mathfrak{s})\le y}}\Lambda(\mathfrak{s}) (\chi_{\text{Hecke}} \xi_{\text{Hecke}}^n) (\mathfrak{s})\Bigg|+Qx^{1/2+2\varepsilon}.
\end{split}
\end{equation}
Set $Q_0:=\log^B x$ where $B>0$ is arbitrarily given. If $2\le \mathcal{N}(\mathfrak{q})\le Q_0$, then we write 
\begin{equation} \label{cgchar}
\sum\limits_{\substack{\mathfrak{s}\in \mathcal{P}_{\mathbb{K}}\\ \mathcal{N}(\mathfrak{s})\le y}}\Lambda(\mathfrak{s}) (\chi_{\text{Hecke}} \xi_{\text{Hecke}}^n) (\mathfrak{s})= 
\frac{1}{h}\cdot \sum\limits_{\psi\in \mathcal{X}} \sum\limits_{\substack{\mathfrak{s}\in \mathcal{I}_{\mathbb{K}}\\ \mathcal{N}(\mathfrak{s})\le y}}\Lambda(\mathfrak{s}) (\chi_{\text{Hecke}} \xi_{\text{Hecke}}^n\psi) (\mathfrak{s}),
\end{equation}
where $\mathcal{X}$ is the group of class group characters, and use Proposition \ref{Siegel} to bound the character sum on the right-hand side. In this way, the contribution of the moduli $\mathfrak{q}$ with $2\le \mathcal{N}(\mathfrak{q})\le Q_0$ to the right-hand side of \eqref{intermediate} is easily bounded by 
\begin{equation}\label{smallmodcont}
(\log x)\sum\limits_{2\le \mathcal{N}(\mathfrak{q})\le Q_0}\frac{1}{\varphi(\mathfrak{q})} \;\; \sideset{}{^\ast}\sum\limits_{\chi \bmod \mathfrak{q}}\sum\limits_{|n|\le Z} \frac{1}{1+|n|} \cdot
 \max\limits_{y\le x} \Bigg|\sum\limits_{\substack{\mathfrak{s}\in \mathcal{P}_{\mathbb{K}}\\ \mathcal{N}(\mathfrak{s})\le y}}\Lambda(\mathfrak{s}) (\chi_{\text{Hecke}} \xi_{\text{Hecke}}^n) (\mathfrak{s})\Bigg|
\ll_{A,B} \frac{x}{\log^A x}.
\end{equation}
Recall the assumption $Q\le x^{4/5}$ from Theorem \ref{main}. We split the remaining contribution into $O(\log^2 x)$ dyadic subsums $D(P,N)$ accounting for moduli $\mathfrak{q}$ satisfying $P<\mathcal{N}(\mathfrak{q}) \le 2P$ with $Q_0\le P\le Q$ and integers $n$ satisfying $N\le |n| \le 2N$ with $0\le N\le x$. If $Q\le x$, then using \eqref{cgchar} and taking the well-known bound 
$$
\frac{1}{\varphi(\mathfrak{q})}\ll \frac{\log\log \mathcal{N}(\mathfrak{q})}{\mathcal{N}(\mathfrak{q})} 
$$
into account, these subsums are bounded by 
\begin{equation}\label{new sum}
D(P,N)\ll \frac{\log^2 x}{P(1+N)} \sum\limits_{\tilde{\chi}\in \mathcal{F}(P,N)}\max\limits_{y\le x}\Big|\sum\limits_{\substack{\mathfrak{s}\in \mathcal{I}_{\mathbb{K}}\\ \mathcal{N}(\mathfrak{s}) \le y}} \Lambda(\mathfrak{s})\tilde{\chi}(\mathfrak{s})\Big|,
\end{equation}
where 
\begin{equation}\label{Family}
\begin{split}
\mathcal{F}(P,N):= \{\chi_{\text{Hecke}}\xi^n_{\text{Hecke}}\psi: & \ \psi\in \mathcal{X}, \mbox{ and } \chi \mbox{ is a primitive Dirichlet character modulo } \mathfrak{q} \\ & \mbox{ with } P<\mathcal{N}(\mathfrak{q})\le 2P \mbox{ and } N\le |n|\le 2N\}.
\end{split}
\end{equation}
Here it is understood that every Hecke character $\chi_{\text{Hecke}}$ belongs to precisely one primitive Dirichlet character $\chi$. Following usual terminology, we refer to $T_I$ as a type I and $T_{II}$ as a type II bilinear sum. We note that 
\begin{equation} \label{Fnote}
\sharp\mathcal{F}(P,N)\ll P^2(N+1).
\end{equation}
Throughout the sequel, we abbreviate the term $N+1$ as  
$$
\tilde{N}:=N+1.
$$

\section{Reduction to bilinear sums}
\subsection{Application of Vaughan's identity}
To transform the inner-most sum over $\mathfrak{s}$ on the right-hand side of \eqref{new sum}, we apply \eqref{Lambdasplit} with $\mathbf{G}(\mathfrak{s})=\tilde{\chi}(\mathfrak{s})I_{\mathcal{N}(\mathfrak{s})>U}$ and the bounds \eqref{theS1}, \eqref{theS2} and \eqref{theS3} to obtain
\begin{equation}\label{applyVaughan's}
\sum\limits_{\mathcal{N}(\mathfrak{s})\le y} \Lambda(\mathfrak{s})\tilde{\chi}(\mathfrak{s})=\sum\limits_{U<\mathcal{N}(\mathfrak{s})\le y} \Lambda(\mathfrak{s})\tilde{\chi}(\mathfrak{s})+O(U)= S_1(\tilde{\chi})+S_2(\tilde{\chi})+S_3(\tilde{\chi})+O(U),
\end{equation}
where $U,V\ge 1$ are free parameters, to be fixed later, and 
\begin{equation*}
 S_1(\tilde{\chi}) \ll (\log UV) \sum\limits_{ \mathcal{N}(\mathfrak{t})\le UV} \Bigg| \sum\limits_{\substack{ U/\mathcal{N}(\mathfrak{t})<\mathcal{N}(\mathfrak{r}) \le y/\mathcal{N}(\mathfrak{t})}} \tilde{\chi}(\mathfrak{t}\mathfrak{r}) \Bigg| 
\end{equation*}
\begin{equation*}
S_2(\tilde{\chi})\ll (\log y) \sum\limits_{\mathcal{N}(\mathfrak{l}) \le V} \max\limits_{w\le y/\mathcal{N}(l)} \Bigg| \sum\limits_{\substack{U/\mathcal{N}(\mathfrak{l})< \mathcal{N}(\mathfrak{r})\le w}} \tilde{\chi}(\mathfrak{l}\mathfrak{r})\Bigg|
\end{equation*}
and
\begin{equation*}
S_3(\tilde{\chi})=-\sum\limits_{U<\mathcal{N}(\mathfrak{m})\le y/ V} \quad \sum\limits_{\max\{U/\mathcal{N}(\mathfrak{m}),V\} < \mathcal{N}(\mathfrak{r}) \le y/ \mathcal{N}(\mathfrak{m})} \Lambda(\mathfrak{m})H(\mathfrak{r})\tilde{\chi}(\mathfrak{m}\mathfrak{r}),
\end{equation*}
where $H(\mathfrak{r})$ is defined as in \eqref{Hdef}. Recalling \eqref{Fnote}, this implies
\begin{equation}\label{T1+T2}
 \sum\limits_{\tilde{\chi}\in \mathcal{F}(P,N)}\max\limits_{y\le x}\Big|\sum\limits_{\mathcal{N}(\mathfrak{s})\le y} \Lambda(\mathfrak{s})\tilde{\chi}(\mathfrak{s})\Big|\ll T_I +T_{II}+P^2\tilde{N}U,
\end{equation}
where 
\begin{equation} \label{T1def}
T_I:=(\log x)\sum\limits_{\tilde{\chi}\in \mathcal{F}(P,N)}  \sum\limits_{ \mathcal{N}(\mathfrak{t})\le UV}\max\limits_{w\le x/\mathcal{N}(\mathfrak{t})} \Bigg| \sum\limits_{\substack{U/\mathcal{N}(\mathfrak{t})< \mathcal{N}(\mathfrak{r})\le w}} \tilde{\chi}(\mathfrak{r}) \Bigg|
\end{equation}
and 
\begin{equation} \label{T2def}
T_{II}:= \sum\limits_{\tilde{\chi}\in \mathcal{F}(P,N)} \max\limits_{y\le x}\Bigg|\sum\limits_{U<\mathcal{N}(\mathfrak{m})\le y/ V} \sum\limits_{\max\{U/\mathcal{N}(\mathfrak{m}),V\} < \mathcal{N}(\mathfrak{r}) \le y/ \mathcal{N}(\mathfrak{m})} \Lambda(\mathfrak{m})H(\mathfrak{r})\tilde{\chi}(\mathfrak{m}\mathfrak{r})\Bigg|,
\end{equation}
provided that $UV\le x$. Following usual terminology, we refer to $T_I$ as a type I and to $T_{II}$ as a type II sum. 

\subsection{Estimation of the type I sum}
By Proposition~\ref{Landau} with $n=2$, the inner-most sum over $\mathfrak{r}$ on the right-hand side of \eqref{T1def} is bounded by
$$
\sum\limits_{\substack{U/\mathcal{N}(\mathfrak{t})< \mathcal{N}(\mathfrak{r})\le w}} \tilde{\chi}(\mathfrak{r}) \ll w^{1/3} \mathcal{N}(\mathfrak{q})^{1/3}\log^2 \mathcal{N}(\mathfrak{q}).
$$
Plugging this bound into \eqref{T1def} and using \eqref{Fnote}, we deduce that
\begin{equation} \label{T1last}
\begin{split}
T_I &\ll (\log x)P^2\tilde{N} \sum\limits_{\mathcal{N}(\mathfrak{l}) \le UV}  x^{1/3}\mathcal{N}(\mathfrak{l})^{-1/3}P^{1/3}\log^2 2P \ll x^{1/3}P^{7/3}\tilde{N}(UV)^{2/3}\log^3 x,
\end{split}
\end{equation}
provided that $P\le x$.

\subsection{Tailoring the type II sum}
To estimate $T_{II}$, we decompose the $\mathfrak{m}$-sum into dyadic subsums. For any $M\in[1,x/V]$, we define 
$$
b_{\mathfrak{m}}:= \begin{cases} \Lambda(\mathfrak{m}) & \mbox{ if } \max (U,M)<\mathcal{N}(\mathfrak{m})\le 2M,\\
0 & \mbox{ otherwise} \end{cases}
$$
and
$$
c_{\mathfrak{r}}:= \begin{cases} H(\mathfrak{r}) & \mbox{ if } \mathcal{N}(\mathfrak{r})>V,\\
0 & \mbox{ otherwise.} \end{cases}
$$
Then 
\begin{equation} \label{TIIdom}
T_{II}\ll \sum\limits_{\substack{j\in \mathbb{N}_0\\ U\le 2^j\le x/V}} T_{II}(2^j),
\end{equation}
where 
\begin{equation}\label{T2 contribution}
T_{II}(M) := \sum\limits_{\tilde{\chi}\in \mathcal{F}(P,N)} \max\limits_{y\le x}\Big|\sum\limits_{\substack{\substack{\mathcal{N}(\mathfrak{m})\le 2M\\ \mathcal{N}(\mathfrak{m})\le y/V}}} \sum\limits_{\substack{\mathcal{N}(\mathfrak{r}) \le x/M \\ U<\mathcal{N}(\mathfrak{mr})\le y}} b_{\mathfrak{m}} c_{\mathfrak{r}} \tilde{\chi}(\mathfrak{m}\mathfrak{r})\Big|.
\end{equation}

\section{Large sieve inequality for Hecke characters in $\mathcal{F}(P,N)$}\label{sec large}
To estimate the type II sum in \eqref{T2 contribution}, we first establish the following large sieve inequality for the family of characters $\mathcal{F}(P,N)$. 
\begin{lemma}\label{LSieve}
Let $\mathcal{F}(P,N)$ be defined as in \eqref{Family}, $K\ge 1$ and $a_{\mathfrak{k}}\in \mathbb{C}$ for any ideal $\mathfrak{k}\in \mathcal{P}_{\mathbb{K}}$. Then we have
\begin{equation} \label{Lsieve2}
\sum\limits_{\tilde{\chi}\in F(P,N)} \Big|\sum\limits_{\mathcal{N}(\mathfrak{k})\le K} a_{\mathfrak{k}} \tilde{\chi}(\mathfrak{k})\Big|^2\ll \left(K + P^3\tilde{N}(PK)^\varepsilon\right) \sum\limits_{\mathcal{N}(\mathfrak{k})\le  K} |a_{\mathfrak{k}}|^2.
\end{equation}
\end{lemma}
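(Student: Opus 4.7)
The plan is to prove Lemma \ref{LSieve} by the duality principle, applying Bessel's inequality (Lemma \ref{Halasz}). For each $\tilde{\chi}\in \mathcal{F}(P,N)$, I would introduce the vector $\Phi_{\tilde{\chi}}:=(\tilde{\chi}(\mathfrak{k}))_{\mathcal{N}(\mathfrak{k})\le K}\in \mathbb{C}^R$, where $R$ is the number of integral ideals of $\mathcal{O}_{\mathbb{K}}$ of norm at most $K$, and set $\mathbf{v}:=(a_{\mathfrak{k}})_{\mathcal{N}(\mathfrak{k})\le K}$. The left-hand side of \eqref{Lsieve2} is then $\sum_{\tilde{\chi}}|(\mathbf{v},\Phi_{\tilde{\chi}})|^2$, and by Lemma \ref{Halasz} it is majorized by $\|\mathbf{v}\|^2 \cdot \max_{\tilde{\chi}}\sum_{\tilde{\chi}'}|(\Phi_{\tilde{\chi}},\Phi_{\tilde{\chi}'})|$, where $\|\mathbf{v}\|^2=\sum_{\mathcal{N}(\mathfrak{k})\le K}|a_{\mathfrak{k}}|^2$. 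It thus suffices to show that the maximum over $\tilde{\chi}$ of the inner sum is $\ll K+P^{8/3}\tilde{N}K^{1/3}\log^2 2P$.

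For a fixed $\tilde{\chi}\in \mathcal{F}(P,N)$, I would split the $\tilde{\chi}'$-sum by whether $\tilde{\chi}'=\tilde{\chi}$ or not. The diagonal contribution is at most $\sum_{\mathcal{N}(\mathfrak{k})\le K}|\tilde{\chi}(\mathfrak{k})|^2\le R\ll K$ by the standard Landau estimate for integral ideals of bounded norm. For the off-diagonal terms, I would exploit the fact that $\tilde{\chi}\overline{\tilde{\chi}'}$ is itself a Hecke character to a modulus dividing $\mathfrak{q}\mathfrak{q}'$ and hence of norm at most $4P^2$. Provided this product character is non-trivial, Proposition \ref{Landau} applied with $n=2$ yields
\[
\left|\sum_{\mathcal{N}(\mathfrak{k})\le K}(\tilde{\chi}\overline{\tilde{\chi}'})(\mathfrak{k})\right|\ll (4P^2)^{1/3}K^{1/3}\log^2(8P^2)\ll P^{2/3}K^{1/3}\log^2 2P,
\]
and multiplying by $\sharp \mathcal{F}(P,N)\ll P^2\tilde{N}$ from \eqref{Fnote} gives the advertised off-diagonal bound $\ll P^{8/3}\tilde{N}K^{1/3}\log^2 2P$. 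Combining with the diagonal contribution and $\|\mathbf{v}\|^2$ produces \eqref{Lsieve2}.

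The remaining, and in my view principal, obstacle is verifying that the off-diagonal product characters $\tilde{\chi}\overline{\tilde{\chi}'}$ are genuinely non-trivial for every $\tilde{\chi}'\neq \tilde{\chi}$ in $\mathcal{F}(P,N)$, so that Proposition \ref{Landau} actually applies. Here I would invoke the concrete parametrization from Section 3.3: writing $\tilde{\chi}=\chi_{\mathrm{Hecke}}\xi_{\mathrm{Hecke}}^n$ and $\tilde{\chi}'=\chi'_{\mathrm{Hecke}}\xi_{\mathrm{Hecke}}^{n'}$ with the specific choices $m(\chi)=u_2(\chi)=0$, triviality of $\tilde{\chi}\overline{\tilde{\chi}'}$ on principal ideals coprime to $\mathfrak{q}\mathfrak{q}'$ forces both the finite parts $\chi,\chi'$ to agree on $(\mathcal{O}_{\mathbb{K}}/\mathfrak{q}\mathfrak{q}')^{\ast}$ and the infinite parts, which are of the shape $|\sigma_1(\cdot)/\sigma_2(\cdot)|^{it}$ with $t$ depending linearly on $n$ and $\arg\chi(\epsilon)$, to cancel. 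Primitivity then forces $\chi=\chi'$ and $\mathfrak{q}=\mathfrak{q}'$, and the cancellation of the infinite parts then collapses to $n=n'$ since $\xi_{\mathrm{Hecke}}$ has infinite order on the generic ideal. Hence the parametrization map $(\chi,n)\mapsto \chi_{\mathrm{Hecke}}\xi_{\mathrm{Hecke}}^n$ is injective on $\mathcal{F}(P,N)$ and the off-diagonal product characters are non-trivial, as required. Everything else is the standard duality-plus-Polya-Vinogradov recipe for character large sieves.
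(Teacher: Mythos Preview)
Your proposal is correct and follows essentially the same route as the paper: apply Bessel's inequality (Lemma~\ref{Halasz}) with the vectors $\mathbf{v}=(a_{\mathfrak{k}})$ and $\Phi_{\tilde\chi}=(\tilde\chi(\mathfrak{k}))$, bound the diagonal term trivially by $K$, and bound each off-diagonal inner product via Proposition~\ref{Landau} applied to the non-principal Hecke character $\tilde\chi\overline{\tilde\chi'}$ modulo $\mathfrak{q}\mathfrak{q}'$, then multiply by $\sharp\mathcal{F}(P,N)\ll P^2\tilde N$. Your extra paragraph verifying that distinct pairs $(\chi,n)$ yield distinct Hecke characters (so that $\tilde\chi\overline{\tilde\chi'}$ is genuinely non-principal) is a point the paper asserts without elaboration, and your primitivity-plus-infinite-part argument is the right justification.
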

\begin{proof}
Applying the duality principle (Lemma~\ref{Duality}), it suffices to show that for any complex sequence $(b_{\tilde{\chi}})$, we have 
\begin{equation} \label{Lsieve3}
S:= \sum\limits_{\mathcal{N}(\mathfrak{k})\le K} \Big|\sum\limits_{\tilde{\chi}\in F(P,N)} b_{\tilde{\chi}} \tilde{\chi}(\mathfrak{k})\Big|^2\ll \big(K+ P^3\tilde{N}(PK)^\varepsilon\big) \, \sum\limits_{\tilde{\chi}\in F(P,N)} |b_{\tilde{\chi}}|^2. 
\end{equation} 
Set 
$$
\omega(x):=\exp\left(-\frac{x^2}{K^2}\right).
$$ 
Then 
\begin{equation*}
S\ll \sum\limits_{\mathfrak{k}\in \mathcal{I}_{\mathbb{K}}} \omega\left(\mathcal{N}(\mathfrak{k})\right)\cdot
\Big|\sum\limits_{\tilde{\chi}\in F(P,N)} b_{\tilde{\chi}} \tilde{\chi}(\mathfrak{k})\Big|^2.
\end{equation*} 
Opening the square and interchanging the order of summations, we deduce that 
\begin{equation} \label{Ses}
\begin{split}
S \ll & \sum\limits_{\tilde{\chi}_1,\tilde{\chi}_2\in F(P,N)} b_{\tilde{\chi}_1}\overline{b_{\tilde{\chi}}}_2 \sum\limits_{\mathfrak{k} \in \mathcal{I}_{\mathbb{K}}} \omega\left(\mathcal{N}(\mathfrak{k})\right)\tilde{\chi}_1\overline{\tilde{\chi}}_2(\mathfrak{k})\\
\ll & \sum\limits_{\tilde{\chi}_1,\tilde{\chi}_2\in F(P,N)} \big( |b_{\tilde{\chi}_1}|^2+ |b_{\tilde{\chi}_2}|^2\big)  \Big| \sum\limits_{\mathfrak{k} \in \mathcal{I}_{\mathbb{K}}} \omega\left(\mathcal{N}(\mathfrak{k})\right)\tilde{\chi}_1\overline{\tilde{\chi}}_2(\mathfrak{k}) \Big|\\
\ll & \sum\limits_{\tilde{\chi}_1\in F(P,N)} |b_{\tilde{\chi}_1}|^2 \sum\limits_{\tilde{\chi}_2\in F(P,N)} \Big|\sum\limits_{\mathfrak{k} \in \mathcal{I}_{\mathbb{K}}} \omega(\mathcal{N}(\mathfrak{k}))\tilde{\chi}_1\overline{\tilde{\chi}}_2(\mathfrak{k})\Big|\\
\ll & \Big(\sum\limits_{\tilde{\chi}_1\in F(P,N)} |b_{\tilde{\chi}_1}|^2 \Big) \max_{\tilde{\chi_1} \in F(P,N)} \sum\limits_{\tilde{\chi}_2\in F(P,N)} \Big| \sum\limits_{\mathfrak{k} \in \mathcal{I}_{\mathbb{K}}} \omega(\mathcal{N}(\mathfrak{k}))\tilde{\chi}_1\overline{\tilde{\chi}}_2(\mathfrak{k})\Big|.
\end{split}
\end{equation}
Clearly,
\begin{equation} \label{dia}
\Big| \sum\limits_{\mathfrak{k} \in \mathcal{I}_{\mathbb{K}}} \omega(\mathcal{N}(\mathfrak{k}))\tilde{\chi}_1\overline{\tilde{\chi}}_2(\mathfrak{k})\Big| \le \sum\limits_{\mathfrak{k} \in \mathcal{I}_{\mathbb{K}}} \omega(\mathcal{N}(\mathfrak{k})) \ll K \quad \mbox{if } \tilde{\chi}_1=\tilde{\chi}_2.
\end{equation}
If $\tilde{\chi}_1\not=\tilde{\chi}_2$, then applying the inverse Mellin transform, we may transform the above sum over $\mathfrak{k}$ into
\begin{equation} \label{Mellinapp} 
\begin{split}
\sum\limits_{\mathfrak{k} \in \mathcal{I}_{\mathbb{K}}} \omega(\mathcal{N}(\mathfrak{k}))
\tilde{\chi}_1\overline{\tilde{\chi}}_2(\mathfrak{k}) &= \frac{1}{2\pi i} \int\limits_{\Re (s)=\sigma}  \Tilde{\omega}(s) \sum\limits_{\mathfrak{k} \in \mathcal{I}_{\mathbb{K}}} \frac{\tilde{\chi}_1\overline{\tilde{\chi}}_2(\mathfrak{k})}{\mathcal{N}(\mathfrak{k})^s} ds \\
&= \frac{1}{2\pi i} \int\limits_{\Re (s)=\sigma} \Tilde{\omega}(s) L(s, \tilde{\chi}_1\overline{\tilde{\chi}}_2)ds,
\end{split}
\end{equation}  
where $\sigma$ is any real number greater than 1 and 
$$
\Tilde{\omega}(s):=\int\limits_{0}^{\infty}\omega(y)y^{s-1}dy=\frac{1}{2}\cdot K^s\Gamma\left(\frac{s}{2}\right)
$$
is the Mellin transform of $\omega$. Since both functions $\tilde{\omega}(s)$ and $L(s,\tilde{\chi}_1\overline{\tilde{\chi}}_2)$ are analytic in the half plane $\Re(s)>0$, the line of integration can be shifted to $\Re(s)=\varepsilon>0$. By the rapid decay of the Gamma function on vertical lines, we have   
\begin{equation} \label{tildeomegabound} 
\widetilde{\omega}(\varepsilon+it) \ll K^{\varepsilon}(1+|t|)^{-j}
\end{equation}
for any $j>0$. The convexity bound for Hecke $L$-functions (see \cite[Exercise 3 on page 100]{IwKo}) implies that
\begin{equation} \label{subcon}
L(\varepsilon+it, \tilde{\chi}_1\overline{\tilde{\chi}}_2)\ll {\bf f}(\tilde{\chi}_1\overline{\tilde{\chi}}_2)^{1/2+\varepsilon} (1+|t|)^{1+\varepsilon},
\end{equation}
where ${\bf f}(\tilde{\chi}_1\overline{\tilde{\chi}}_2)$ is the norm of the conductor of  $\tilde{\chi}_1\overline{\tilde{\chi}}_2$ (for information on the conductor, see \cite[section 3.3]{Miya}). Recalling \eqref{Family}, we have  
\begin{equation} \label{conductorbound}
{\bf f}(\tilde{\chi}_1\overline{\tilde{\chi}}_2)\ll P^2
\end{equation} 
if $\tilde{\chi}_1,\tilde{\chi}_2\in F(P,N)$. 
Using \eqref{Mellinapp}, \eqref{tildeomegabound}, \eqref{subcon} and \eqref{conductorbound} with $j>2+\varepsilon$, we have the bound
\begin{equation} \label{afterMellin}
\sum\limits_{\mathfrak{k} \in \mathcal{I}_{\mathbb{K}}} \omega(\mathfrak{k})\tilde{\chi}_1\overline{\tilde{\chi}}_2(\mathfrak{k})\ll P(PK)^{\varepsilon} \quad \mbox{if } \tilde{\chi}_1\not=\tilde{\chi}_2.
\end{equation}  
Combining \eqref{Fnote}, \eqref{Ses}, \eqref{dia} and \eqref{afterMellin}, we obtain the desired estimate \eqref{Lsieve3}. This completes the proof. 
\end{proof}

{\bf Remark 3:} Since $\sharp\mathcal{F}(P,N)\ll P^2\tilde{N}$, we conjecture that \eqref{Lsieve2} holds with the term $P^3\tilde{N}(PK)^{\varepsilon}$ replaced by $P^2\tilde{N}(P\tilde{N}K)^{\varepsilon}$. 

\section{Estimation of  type II sums}
Using Lemma \ref{LSieve} above, we now bound the type II sum $T_{II}(M)$.
\begin{lemma}\label{Bilinearsum} 
We have 
\begin{equation}\label{BilinearSum2}
\begin{split}
T_{II}(M) \ll & \Big(M+ P^{3} \tilde{N} (Px)^{\varepsilon}\Big)^{1/2}\Big(xM^{-1} + P^{3} \tilde{N} (Px)^{\varepsilon} \Big)^{1/2} \times \\
&\Big(\sum\limits_{\substack{\mathcal{N}(\mathfrak{m})\le 2M}} |b_{\mathfrak{m}}|^2\Big)^{1/2} \Big(\sum\limits_{\substack{\mathcal{N}(\mathfrak{r})\le x/M}}|c_{\mathfrak{r}}|^2\Big)^{1/2} \log^2 x.
\end{split}
\end{equation}
\end{lemma}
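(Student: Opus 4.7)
\medskip

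The plan is a textbook Cauchy--Schwarz plus large sieve argument (Lemma \ref{LSieve} applied twice), the only subtlety being the entanglement of the variables $\mathfrak m$ and $\mathfrak r$ coming from the conditions $U<\mathcal N(\mathfrak{mr})\le y$ and $\mathcal N(\mathfrak m)\le y/V$ together with the maximum over $y\le x$. Once this entanglement is removed, Cauchy--Schwarz in the $\tilde\chi$ sum decouples $\mathfrak m$ and $\mathfrak r$, and the two resulting second-moment averages are exactly of the shape handled by Lemma \ref{LSieve}, with $K=2M$ for the $\mathfrak m$-sum and $K=x/M$ for the $\mathfrak r$-sum, producing the two factors in the claimed bound.

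To decouple, I would use Lemma \ref{LemmaCosmetic} (cosmetic surgery) applied with $\alpha=\log\mathcal N(\mathfrak r)$ (respectively $\log\mathcal N(\mathfrak m)$) and $\beta=\log(y/\mathcal N(\mathfrak m))$ (respectively $\log(y/V)$) to rewrite each of the indicators
\[
\mathbf 1_{U<\mathcal N(\mathfrak{mr})\le y},\qquad \mathbf 1_{\mathcal N(\mathfrak m)\le y/V}
\]
as a short integral over $t\in[-T,T]$ of factors of the form $\mathcal N(\mathfrak r)^{it}$ (resp.\ $\mathcal N(\mathfrak m)^{it}$) times a kernel depending only on the other variable and on $y$. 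Choosing $T$ to be a small power of $x$ makes the error term in Lemma \ref{LemmaCosmetic} negligible after multiplying by the trivial bound $\sum_{\mathfrak m,\mathfrak r}|b_{\mathfrak m}c_{\mathfrak r}|$ (which is of size $O(x\log x)$ by the prime number theorem and the trivial bound on $H(\mathfrak r)$), while the integrations over $t$ contribute only logarithmic losses. After this manoeuvre the weights attached to $\mathfrak m$ and $\mathfrak r$ separate into $b_{\mathfrak m}\mathcal N(\mathfrak m)^{it}$ and $c_{\mathfrak r}\mathcal N(\mathfrak r)^{it'}$ (with harmless extra modulus-one factors depending on $y$), so the maximum over $y\le x$ becomes a maximum over a bounded family of such weighted sums and can be absorbed into another $\log x$ factor.

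With the variables separated, I apply Cauchy--Schwarz over $\tilde\chi\in\mathcal F(P,N)$:
\[
T_{II}(M)\ll (\log^2 x)\Bigl(\sum_{\tilde\chi\in\mathcal F(P,N)}\Bigl|\sum_{\mathcal N(\mathfrak m)\le 2M} b'_{\mathfrak m}\tilde\chi(\mathfrak m)\Bigr|^2\Bigr)^{1/2}\Bigl(\sum_{\tilde\chi\in\mathcal F(P,N)}\Bigl|\sum_{\mathcal N(\mathfrak r)\le x/M} c'_{\mathfrak r}\tilde\chi(\mathfrak r)\Bigr|^2\Bigr)^{1/2},
\]
where $b'_{\mathfrak m}$, $c'_{\mathfrak r}$ are $b_{\mathfrak m}$, $c_{\mathfrak r}$ twisted by unimodular factors, so that $\sum|b'_{\mathfrak m}|^2=\sum|b_{\mathfrak m}|^2$ and similarly for $c'$. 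Applying Lemma \ref{LSieve} with $K=2M$ to the first factor gives $(M+P^{8/3}\tilde N M^{1/3}\log^2 2P)\sum|b_{\mathfrak m}|^2$, and with $K=x/M$ to the second factor gives $(x/M+P^{8/3}\tilde N(x/M)^{1/3}\log^2 2P)\sum|c_{\mathfrak r}|^2$, and the lemma follows.

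The main obstacle is the bookkeeping for the decoupling step: one must verify that the error in Lemma \ref{LemmaCosmetic} can genuinely be made negligible for all $\mathfrak m,\mathfrak r$ (the denominator $|\beta-\alpha|$ can be small when $\mathcal N(\mathfrak{mr})$ is near the cut-off $y$), which is handled by a standard dyadic shift of the threshold or by noting that such pairs contribute trivially because of the smoothness of the counting function at this scale. No new ingredients are needed beyond Lemma \ref{LemmaCosmetic} and Lemma \ref{LSieve}.
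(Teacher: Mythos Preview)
Your proposal is correct and follows essentially the same route as the paper: remove the coupling conditions $U<\mathcal N(\mathfrak{mr})\le y$ and $\mathcal N(\mathfrak m)\le y/V$ (and the max over $y$) via cosmetic surgery (Lemma~\ref{LemmaCosmetic}), then apply Cauchy--Schwarz over $\tilde\chi$ and invoke Lemma~\ref{LSieve} with $K=2M$ and $K=x/M$ respectively, picking up the extra $\log^2 x$. The paper simply cites \cite[proof of Satz 5.5.2]{Bru} for the decoupling bookkeeping you sketch, so your write-up is in fact more explicit about that step than the paper's own proof.
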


\begin{proof}
For any $K,L\ge 1$ and sequences $b_{\mathfrak{m}}$ and $c_{\mathfrak{r}}$ of complex numbers with indices $\mathfrak{m}$ and $\mathfrak{r}$ running over $\mathcal{I}_{\mathbb{K}}$, we have, using the Cauchy-Schwarz inequality and Lemma \ref{LSieve}, 
\begin{equation}\label{BilinearSum3}
\begin{split}
&\sum\limits_{\tilde{\chi}\in F(P,N)} \Big|\sum\limits_{\substack{\mathcal{N}(\mathfrak{m})\le K}} \sum\limits_{\substack{\mathcal{N}(\mathfrak{r}) \le L }} b_{\mathfrak{m}} c_{\mathfrak{r}} \tilde{\chi}(\mathfrak{m}\mathfrak{r})\Big| \\
\le & \Big(\sum\limits_{\tilde{\chi}\in F(P,N)} \Big|\sum\limits_{\substack{\mathcal{N}(\mathfrak{m})\le K}} b_\mathfrak{m}\tilde{\chi}(\mathfrak{m})\Big|^2\Big)^{1/2} \Big(\sum\limits_{\tilde{\chi}\in F(P,N)} \Big|\sum\limits_{\substack{1\le \mathcal{N}(\mathfrak{r})\le L}} c_\mathfrak{r}\tilde{\chi}(\mathfrak{r})\Big|^2\Big)^{1/2}\\\
\ll & \left(K + P^{3}\tilde{N} (PK)^{\varepsilon} \right)^{1/2}\left(L + P^{3}\tilde{N} (PL)^{\varepsilon}\right)^{1/2}\times\\ &  \Big(\sum\limits_{\substack{\mathcal{N}(\mathfrak{m})\le K}} |b_{\mathfrak{m}}|^2\Big)^{1/2} \Big(\sum\limits_{\substack{\mathcal{N} (\mathfrak{r})\le L}}|c_{\mathfrak{r}}|^2\Big)^{1/2}.
\end{split}
\end{equation}
The term $T_{II}(M)$ defined in \eqref{T2 contribution} takes the same form as the sum in the first line above with $K=2M$ and $L=x/M$, except that there are two extra summation conditions $\mathcal{N}(\mathfrak{m})\le y/V$ and $U<\mathcal{N}(\mathfrak{mr})\le y$ and an extra maximum over $y\le x$. These extra  conditions and maximum can be removed in a standard way using Lemma \ref{LemmaCosmetic} (cosmetic surgery), reducing the estimation of $T_{II}(M)$ to \eqref{BilinearSum3} (for details, see \cite[proof of Satz 5.5.2.]{Bru}, for example). This comes at the cost of an extra factor of $\log^2 x$, as compared to the bound in \eqref{BilinearSum3}. Hence, we arrive at the desired estimate \eqref{BilinearSum2}.
\end{proof}
Noting the well-known bounds 
$$
\sum\limits_{\substack{\mathcal{N}(\mathfrak{m})\le K}} |b_{\mathfrak{m}}|^2\le \sum\limits_{\substack{\mathcal{N}(\mathfrak{m})\le K}} \Lambda({\mathfrak{m}})^2\ll K\log 2K
$$
and 
$$
\sum\limits_{\mathcal{N}(\mathfrak{r})\le L}|c_{\mathfrak{r}}|^2\le \sum\limits_{\mathcal{N}(\mathfrak{r})\le L}\tau(\mathfrak{r})^2\ll L \cdot \log^3 2L,
$$
where $\tau(\mathfrak{r})$ denotes the number of ideal divisors of $\mathfrak{r}$, we deduce that
\begin{equation} \label{TMfinal}
\begin{split}
T_{II}(M) \ll & \left(M^{1/2} + P^{3/2} \tilde{N}^{1/2} (Px)^{\varepsilon/2} \right) \left(x^{1/2}M^{-1/2} + P^{3/2} \tilde{N}^{1/2} (Px)^{\varepsilon/2} \right)\times\\ & (M\log x)^{1/2} (xM^{-1}\log^3 x)^{1/2} \log^2 x \\
\ll & \Big(x+xP^{3/2}\tilde{N}^{1/2}M^{-1/2}(Px)^{\varepsilon/2}+x^{1/2}P^{3/2}\tilde{N}^{1/2}M^{1/2}(Px)^{\varepsilon/2}\\
& + x^{1/2}P^{3}\tilde{N}(Px)^{\varepsilon}\Big) \log^{4} x.
\end{split}
\end{equation}
From \eqref{TIIdom} and \eqref{TMfinal}, we infer the bound
\begin{equation}\label{T2last}
T_{II}\ll\Big(x+xP^{3/2}\tilde{N}^{1/2}\left(U^{-1/2}+V^{-1/2}\right)(Px)^{\varepsilon/2}+ x^{1/2}P^{3}\tilde{N}(Px)^{\varepsilon}\Big) \log^{7} x,
\end{equation}
provided that $P\le x$. 

\section{Proof of Theorem \ref{main}}
Combining \eqref{new sum}, \eqref{T1+T2}, \eqref{T1last} and \eqref{T2last}, we obtain
 \begin{equation}\label{Finalbound}
\begin{split}
D(P,N) \ll  & \Big(xP^{-1}\tilde{N}^{-1}+xP^{1/2}\tilde{N}^{-1/2}(U^{-1/2}+V^{-1/2})(Px)^{\varepsilon/2}+ \\ & x^{1/2}P^{2}(Px)^{\varepsilon}+x^{1/3}P^{4/3}(UV)^{2/3}+PU\Big) \log^9 x   
\end{split}
\end{equation}
if $P\le x$. Now we assume, more restrictively, that $P\le x^{4/5}$ and pick
$$
U:=V:=x^{4/11}P^{-5/11},
$$
noting that $U,V\ge 1$ under this assumption.  Under this choice, \eqref{Finalbound} implies
\begin{equation*}
D(P,N) \ll\left(xP^{-1}+ \left(x^{9/11}P^{8/11}+ x^{1/2}P^{2}\right)(Px)^{\varepsilon}\right) \log^9 x.   
\end{equation*}
Recalling the considerations in subsection \ref{SWT} and fixing $B:=A+11$, we conclude that 
$$
\sum\limits_{2\le \mathcal{N}(\mathfrak{q})\le Q} E^\ast(x;\mathfrak{q})\ll \left(x^{9/11}Q^{8/11}+ x^{1/2}Q^{2}\right) x^{2\varepsilon}+\frac{x}{\log^{A} x}
$$
if $Q\le x^{4/5}$. Recalling the notations in subsection \ref{Picking}, this is the same as the bound
\begin{equation} \label{same}
\sum\limits_{\mathcal{N}(\mathfrak{q})\le Q}\,\, \max\limits_{\substack{a\bmod \mathfrak{q} \\(a,\mathfrak{q})=1}} \,\, \max\limits_{y\le x}\left| \tilde{E}_S(y;\mathfrak{q},a) \right| \ll \left(x^{9/11}Q^{8/11}+ x^{1/2}Q^{2}\right) x^{\varepsilon}+\frac{x}{\log^A x}
\end{equation}
upon redefining $\varepsilon$. 
Thus, for the proof of Theorem \ref{main}, it suffices to approximate $\tilde{E}_S(y;\mathfrak{q},a)$ sufficiently closely by $E_S(y;\mathfrak{q},a)$. To this end, we need to appoximate the main term
$$
M_S(y;\mathfrak{q},a)=\frac{\eta}{\varphi(\mathfrak{q})} \sum\limits_{\substack{s\in S(-1,1;y)\\ (s,\mathfrak{q})=1}} \Lambda(s)
$$
by 
$$
\tilde{M}_S(y;\mathfrak{q},a):=\frac{1}{\varphi(\mathfrak{q})} \sum\limits_{\substack{s\in S(\eta_1,\eta_2;y)\\ (s,\mathfrak{q})=1}} \Lambda(s),
$$
which can be done in a similar way as in subsection \ref{sectorspicking} by picking out the sector condition $s\in S(\eta_1,\eta_2;x)$ using Hecke characters of the form $\xi_{\text{Hecke}}^n$. This results in an appoximation of the form
$$
M_S(y;\mathfrak{q},a)=\tilde{M}_S(y;\mathfrak{q},a)+O\left(\frac{x}{\varphi(\mathfrak{q})\log^{A+1} x}\right)
$$
and hence,
$$
E_S(y;\mathfrak{q},a)=\tilde{E}_S(y;\mathfrak{q},a)+O\left(\frac{x}{\varphi(\mathfrak{q})\log^{A+1} x}\right),
$$
which clearly suffices to deduce \eqref{th main eq1} from \eqref{same}. This completes the proof of Theorem \ref{main}. \\ \\
{\bf Remark 4.} The sole reason that we are not able to handle small sectors with $\eta= x^{-\delta}$ for $\delta$ in a suitable interval $0<\delta\le \delta_0$ along the above lines is the following.  Our treatment of small moduli with $\mathcal{N}(\mathfrak{q})\le Q_0=\log^A x$ making direct use of the Siegel-Walfisz theorem does not give a bound better than $O(x\exp(-C\sqrt{\log x}))$ for their total contribution, no matter what the size of $\eta$ is. However, we need a bound of order of magnitude $O(\eta x\log^{-A} x)$ if $\eta$ is allowed to depend on $x$. To resolve this issue, we may resort to using the explicit formula and zero-density estimates for Hecke $L$-functions, which requires more machinery. 
Of course, large sieve inequalities for Hecke characters (more generally, mean value estimates for Dirichlet polynomials twisted with Hecke characters) are needed here as well.
\\ \\
{\bf Acknowledgements.} Both authors would like to thank the anonymous referee for valuable comments, in particular, for pointing out a way to improve our initial level of distribution $1/5-\varepsilon$ to $1/4-\varepsilon$ using a refined treatment of the large sieve with Hecke characters. The first-named author would like to thank the Ramakrishna Mission Vivekananda Educational and Research Institute for an excellent work environment. The second-named author is grateful to the Indian Statistical Institute Kolkata for its excellent research environment and acknowledges support from the Research Associate Fellowship at the Institute.

\end{document}